\newcommand{\vol}{\operatorname{vol}}
\newcommand{\old}[1]{}
\newcommand{\Z}{{\mathbb Z}}
\newcommand{\R}{{\mathbb R}}
\newcommand{\N}{{\mathbb N}}
\renewcommand{\P}{{\mathbb P}}
\newcommand{\E}{{\mathbb E}}
\newcommand{\cE}{\mathcal{E}}
\newcommand{\trho}{\widetilde{\rho}}
\newcommand{\tell}{\widetilde{\ell}}
\newtheorem{thm}{Theorem}
\newtheorem{lemma}{Lemma}
\newtheorem{prop}[lemma]{Proposition}
\theoremstyle{definition}
\newtheorem{remark}[lemma]{Remark}
\numberwithin{lemma}{section}
\numberwithin{equation}{section}
\newcounter{mycount}
\title[Stable matchings via the PWIT]{Stable matchings in high dimensions via the Poisson-weighted infinite tree}
\author{Alexander E.\ Holroyd, James B.\ Martin and Yuval Peres}
\subjclass[2010]{60D05; 60G55; 05C70}
\keywords{Poisson process, point process, stable matching, Poisson-weighted infinite tree}
\date{27 December 2018}
\begin{document}
\maketitle

\begin{abstract}
We consider the stable matching of two independent Poisson processes in $\R^d$ under an asymmetric color restriction.  Blue points can only match to red points, while red points can match to points of either color.   It is unknown whether there exists a choice of intensities of the red and blue processes under which all points are matched.  We prove that for any fixed intensities, there are unmatched blue points in sufficiently high dimension.  Indeed, if the ratio of red to blue intensities is $\rho$ then the intensity of unmatched blue points converges to $e^{-\rho}/(1+\rho)$ as $d\to\infty$.  We also establish analogous results for certain multi-color variants.  Our proof uses stable matching on the Poisson-weighted infinite tree (PWIT), which can be analyzed via  differential equations.  The PWIT
has been used in many settings as a scaling limit
for models involving complete graphs with independent
edge weights,
but as far as we are aware, this is the first
presentation of a rigorous application to high-dimensional Euclidean space.  Finally, we analyze the asymmetric matching problem under a hierarchical metric, and show that there are unmatched points for all intensities.
\end{abstract}


\section{Introduction}
\subsection{Stable matching in $\R^d$}
Let $S$ be a set of points in $\R^d$,
and let $M$ be a matching of $S$.
For each $x\in S$, write
$d_M(x)$ for the distance of $x$ from its partner
in the matching $M$, with $d_M(x)=\infty$ if $x$ is
unmatched by $M$.

A pair of distinct points $x,y\in S$
is an \textbf{unstable pair} for the matching $M$
if $|x-y|<\min(d_M(x), d_M(y))$.
If $M$ has no unstable pairs,
then it is said to be
a \textbf{stable matching}
(as introduced by Gale and Shapley \cite{GaleShapley}).
We can interpret this definition by
saying that each point would like to find a partner
as near as possible to itself (and would prefer any partner
rather than remaining unmatched), and that a matching $M$ is stable
if there is no pair of points that would both prefer to be matched
to each other over their situation in $M$.

Holroyd, Pemantle, Peres and Schramm \cite{HPPS}
studied stable matching for the points of a
homogenous
Poisson process. They showed
that with probability 1, there exists
a unique stable matching, under which every point is matched.
Let the random variable $X$ represent the distance of
a typical point of the process to its partner in this stable matching. Then Theorem 5 of \cite{HPPS} says that $\E X^d=\infty$,
but $\P(X>r)\leq Cr^{-d}$ for some constant $C=C(d)<\infty$.

Now suppose that the points of the process are of two types;
each point is independently coloured blue with probability
$p \in(0,1)$ and red with probability $1-p$.
Restrict to matchings in which a red point and a blue
point may be matched, but two points of the same colour may
not be matched.
Correspondingly
the definition of unstable pair is restricted to
pairs consisting of one red point and one blue point; the definition of a
stable matching is otherwise unchanged.
Again, it is shown
in \cite{HPPS} that with probability 1 there exists a unique stable matching.
If $p=1/2$ (so that the model is symmetric between red and blue)
then with probability 1, every point is matched;
it is shown that the distribution of the distance
from a typical point to its partner has a polynomial tail
(although for $d\geq 2$ there is a gap between the upper and lower
bounds on the exponent). On the other hand, suppose $p<1/2$. Then
with probability 1, all blue points are matched, and a positive
density of red points remain unmatched.

%
%
\begin{figure}
\begin{center}
\includegraphics[width=0.68\textwidth]
{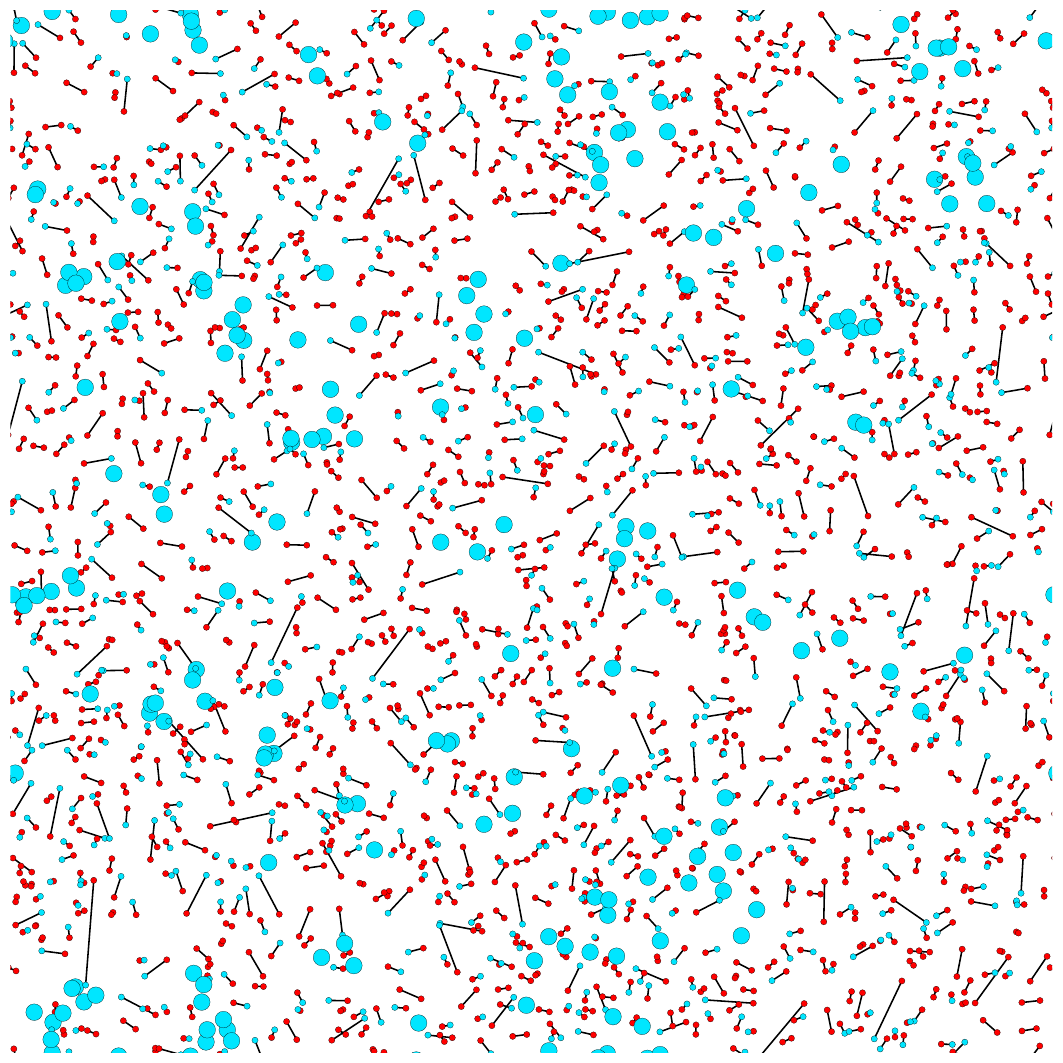}
\\[1ex]
\includegraphics[width=0.68\textwidth]
{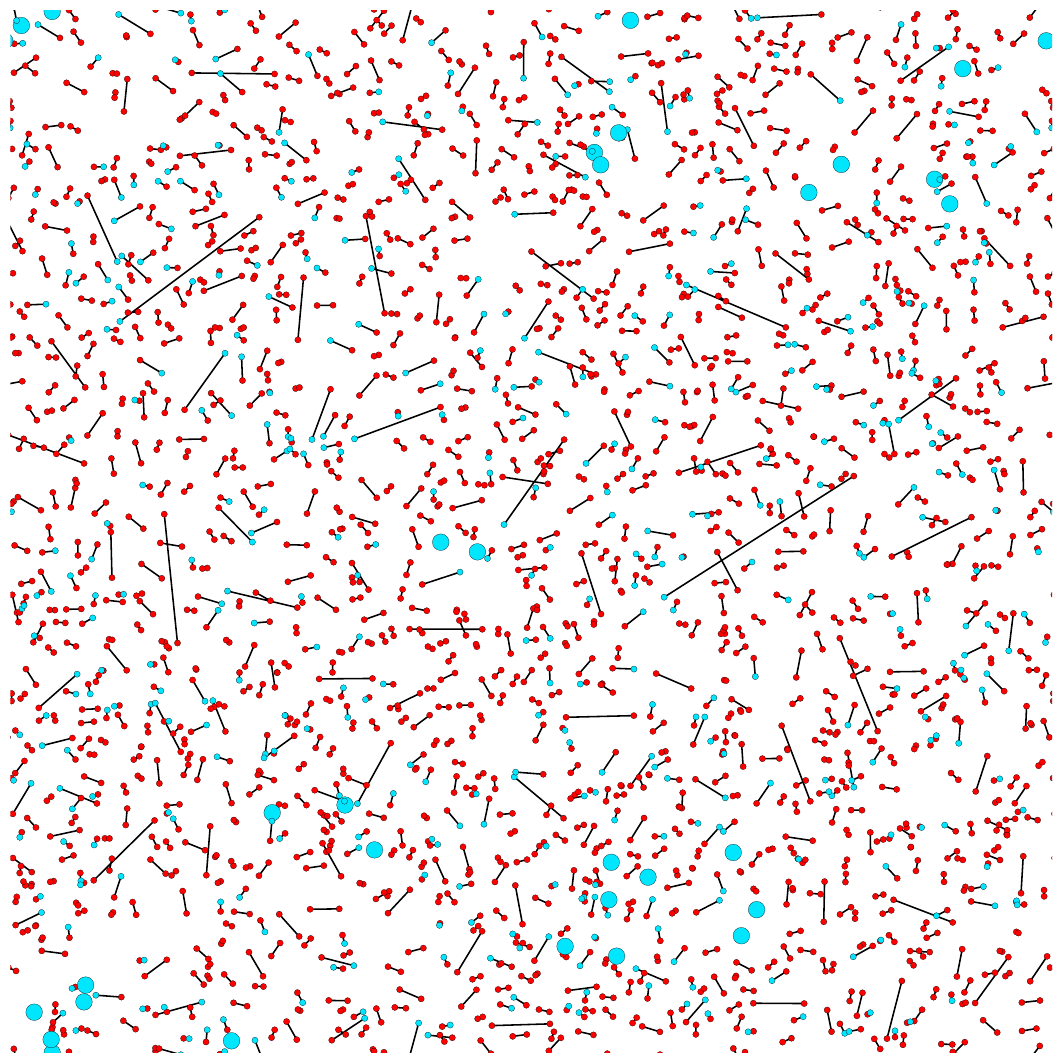}
\caption{
\label{fig:asymm}
The two-color asymmetric model
(with red-red and red-blue matches allowed)
for uniformly distributed points in a two-dimensional torus.
Unmatched points are shown larger.  \emph{Top:} 2000 red and 1000 blue points.  \emph{Bottom:} 2500 red and 500 blue points.
}
\end{center}
\end{figure}

For the models above, the question of whether
the stable matching is perfect (i.e.\ whether every point is matched)
is easy to answer using arguments involving translation invariance,
ergodicity and mass transport (although many interesting questions
remain about the nature of the matching).
In this paper we study natural variants where, in contrast, the question of
whether the stable matching is perfect already
presents a challenge.

Again suppose the points of a Poisson process in $\R^d$ are colored
independently blue (with probability $p$) or red (with probability $1-p$).
We now consider an asymmetric rule, under which red-red and red-blue
matches are allowed, while blue-blue matches are forbidden.
Subject to this restriction, each point prefers to be matched
at as short a distance as possible.
From Proposition \ref{prop:uniquestable} below,
we will be able to obtain that with probability 1 a stable matching
exists and is unique. Let $M$ be this stable matching. From the ergodicity of the Poisson process,
the intensity of
the set of red points matched by $M$ to blue points
is an almost sure constant, and the same is true for the
set of blue points matched by $M$ to red points. By a mass transport argument,
these intensities are equal.

If $p>1/2$, then there must be some blue points left unmatched.
In fact, it is easy to see that this is still the case for some $p<1/2$,
since some pairs of red points will be matched to each other
(for example, any pair which are each other's nearest neighbours).
We conjecture that this remains true for all $p>0$. As far as we are aware,
this is not known for any $d$. Here we make the following progress towards the conjecture:
for any fixed $p>0$, if $d$ is sufficiently large, then there are unmatched blue points.
%
%

\begin{thm}
\label{thm:asymmetricRd}
For a Poisson process of intensity $1$ in $\R^d$ in which each point independently is blue with probability $p\in(0,1)$ and red with probability
$1-p$, consider the asymmetric two-type stable matching, under which only
red-red and red-blue matches are allowed.  For fixed $p$, the intensity of
unmatched blue points
converges to $p e^{-(1-p)/p}$ as $d\to\infty$.
For a non-zero density of unmatched blue points, it suffices to take $d>\frac{c}{p}e^{1/p}$,
where $c$ is some absolute constant.
\end{thm}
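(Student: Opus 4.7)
The plan is to use the Poisson-weighted infinite tree (PWIT) as the local limit of the two-colour Poisson process in $\R^d$ as $d\to\infty$ and to analyse the asymmetric stable matching directly on the PWIT through a coupled system of ordinary differential equations.

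\emph{Analysis on the PWIT.} In the PWIT, each vertex has countably many children whose edge weights form a rate-$1$ Poisson process on $[0,\infty)$; colour each vertex independently blue with probability $p$ and red with probability $1-p$, and consider the asymmetric stable matching on tree edges. Let $\bar F_B(r)$ (resp.\ $\bar F_R(r)$) denote the probability that the root, when blue (resp.\ red), has no partner within distance $r$ in its own subtree. Conditioning on the first generation of children and noting that the root has no partner within $r$ iff every eligible child $w$ at distance $s\leq r$ has already been matched at distance $<s$ within its own sub-subtree (an event of probability $1-\bar F_{\mathrm{col}(w)}(s)$), one obtains
\begin{equation*}
\bar F_B(r)=\exp\!\Bigl(-(1-p)\!\int_0^r\!\bar F_R(s)\,ds\Bigr),\quad
\bar F_R(r)=\exp\!\Bigl(-\!\int_0^r\!\bigl[p\bar F_B(s)+(1-p)\bar F_R(s)\bigr]\,ds\Bigr),
\end{equation*}
since for a blue root only red children (density $1-p$) are eligible. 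Differentiating and subtracting, $(\ln\bar F_R-\ln\bar F_B)'=-p\bar F_B$, so with $H(r)=\int_0^r\bar F_B$ one has $\bar F_R=\bar F_B\,e^{-pH}$, and
\begin{equation*}
\int_0^\infty\!\bar F_R(s)\,ds=\int_0^\infty\!H'(r)e^{-pH(r)}\,dr=\tfrac{1}{p}\bigl(1-e^{-pH(\infty)}\bigr)=\tfrac{1}{p},
\end{equation*}
using $H(\infty)=\infty$ (implied by $\bar F_B(\infty)>0$). Substituting back gives $\bar F_B(\infty)=e^{-(1-p)/p}$, and by a Slivnyak-type argument this becomes the limiting probability that a typical blue point of the $\R^d$ process is unmatched, yielding the announced intensity $p\,e^{-(1-p)/p}$.

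\emph{Coupling to $\R^d$.} After rescaling distances so that unit-volume balls contain on average one point, the ordered ball-volumes through the nearest neighbours of a typical point converge as $d\to\infty$ to a rate-$1$ Poisson process, and two distinct neighbours of a common point become, with high probability, far apart relative to their distance to that point; this is the standard local weak convergence of the high-dimensional Poisson process to the PWIT. The main obstacle is transferring the stable matching itself across this limit, since a point's partner can in principle depend on arbitrarily distant points through a chain of preferences. One therefore needs a \emph{stabilisation} argument showing that, with high probability, the matching status of a typical point is determined by a bounded number of ``recursion steps''; uniqueness from Proposition~\ref{prop:uniquestable} (and a PWIT analogue) together with the polynomial tail bound of \cite{HPPS} should supply the decay needed to truncate the recursion and couple the two models.

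\emph{Explicit threshold.} For the sufficient condition $d>(c/p)e^{1/p}$, I would quantify the above truncation: for finite $d$, a bounded number of ODE iterates already suffices to bound the unmatched-blue intensity from below by, say, $\tfrac{1}{2}p\,e^{-(1-p)/p}$, provided $d$ exceeds a threshold of order $p^{-1}e^{1/p}$ coming from the rate at which truncated PWIT recursions approach their fixed point. The hardest ingredient throughout is the stabilisation estimate that legitimises this truncation uniformly in $d$.
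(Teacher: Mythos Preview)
Your PWIT computation is correct and in fact slightly slicker than the paper's: you work with the conditional survival functions $\bar F_B,\bar F_R$ and integrate the relation $\bar F_R=\bar F_B e^{-pH}$ directly, whereas the paper passes to $R=-\log r$, $B=-\log b$ and solves $R-B=-\log(-B+c)$. Either way one arrives at $\bar F_B(\infty)=e^{-(1-p)/p}$. (Your parenthetical ``implied by $\bar F_B(\infty)>0$'' is a little circular as written; the clean argument is that $H(\infty)<\infty$ would force both $\bar F_B\to 0$ and, via the formula, $\bar F_B(\infty)>0$.)

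The transfer to $\R^d$, however, has a real gap. You propose to rely on local weak convergence plus a ``stabilisation'' estimate, citing the polynomial tail bound of \cite{HPPS}. This does not work: the tail bounds in \cite{HPPS} are for the one-type and the \emph{symmetric} two-type models, not for the asymmetric model here, and in any case a tail bound on the matching distance does not obviously yield the locality you need. The paper avoids stabilisation altogether. The key observation is Proposition~\ref{prop:whoismatched}: whether a point is matched \emph{within distance $R$} is a deterministic function of the finite set $E_R^\downarrow(x)$ of \emph{descending} paths from $x$ with weights below $R$. One then couples (Proposition~\ref{prop:couplingsuccess}) the descending-path structure in $\R^d$ to that in the PWIT, not the usual bounded-total-length neighbourhood; the paper is explicit that ordinary local weak convergence is not the right mode here.

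This still only controls $b^{(d)}(T)$ and $r^{(d)}(T)$ for finite $T$, not $b^{(d)}(\infty)$. The missing piece in your outline is the mass-transport sandwich
\[
b^{(d)}(T)-r^{(d)}(T)\ \le\ b^{(d)}(\infty)\ \le\ b^{(d)}(T),
\]
which holds because every blue point matched beyond distance $T$ is matched to a red point also matched beyond $T$. Combined with the ODE estimate $r(T)\le 1/T$ and the coupling error $e^{-\alpha T}$ valid once $d>cT$, this pins down $b^{(d)}(\infty)$ to within $O(1/T)+e^{-\alpha T}$ of $b(\infty)=p e^{-(1-p)/p}$. Taking $T$ a large multiple of $b(\infty)^{-1}\asymp p^{-1}e^{1/p}$ then yields both the limit and the explicit threshold $d>\tfrac{c}{p}e^{1/p}$. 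Your sketch of the threshold via ``truncated ODE iterates'' does not supply this mechanism.
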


See Figure \ref{fig:asymm} for simulations of the asymmetric two-type model
in a two-dimensional torus.  The two simulations use the same random set of points, but some blue points in the upper picture become red in the lower picture.  Can the reader find a blue point that is matched in the upper picture but unmatched in the lower one?

We next consider a multi-type symmetric model.
Suppose that there are $k$ different colours,
and each point of the Poisson process independently
receives colour $i$ with probability $p_i$, where $(p_1, \dots, p_k)$ is a probability vector.

Any two points of different colours can be matched,
but two points of the same colour may not be matched.

Again, Proposition \ref{prop:uniquestable} will yield that
there exists a unique stable matching. If $p_1>1/2$,
then with probability 1, some points of colour 1 will remain
unmatched in this stable matching.
We conjecture that this remains true whenever
$p_1>\max_{2\leq i\leq k}p_i$.
We show that
for a given collection $p_1,\dots,p_k$, this
is true for sufficiently large $d$.

\begin{thm}
\label{thm:symmetricRd}
Fix a probability vector $(p_1,\dots, p_k)$.
Consider the multi-type symmetric stable matching
of a Poisson process of rate 1 in $\R^d$
with $k$ colours,
where $p_i$ gives the probability of colour $i$.
Suppose $p_1>\max_{2\leq i\leq k}p_i$.
Then there exists some strictly positive $\lambda=\lambda(p_1,\dots,p_k)$
such that the intensity of unmatched type-1 points
converges to $\lambda$ as $d\to\infty$.
In the case where $p_i$ are equal for all $i\geq 2$, we have
$\lambda=(p_1-p_2)^{k-1} p_1^{-(k-2)}$.
\end{thm}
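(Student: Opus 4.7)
The plan is to adapt the PWIT framework from Theorem~\ref{thm:asymmetricRd} to the multi-type setting.  Equip the Poisson-weighted infinite tree with i.i.d.\ vertex colors drawn from $(p_1,\ldots,p_k)$ and define the stable matching under the rule forbidding same-color matches.  As in the Euclidean setting, the stable matching coincides with the greedy matching obtained by processing edges in increasing order of weight and matching each compatible unmatched pair.  This greedy description is tractable on the tree because removing a vertex $v$ splits the tree into independent PWIT subtrees: writing $T_v\in[0,\infty]$ for the time $v$ is matched using only edges inside the subtree rooted at $v$, the root $\emptyset$ of color $c$ has $T_\emptyset>t$ iff every child edge of weight at most $t$ is blocked --- either the child has color $c$, or the child has already been matched within its own subtree.

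Set $\alpha_c(t)=\P(T_\emptyset>t\mid\text{root has color }c)$.  The Poisson structure of the child edges converts the recursion into the differential system
\[
\alpha_c'(t)=-\alpha_c(t)\sum_{c'\neq c}p_{c'}\alpha_{c'}(t),\qquad \alpha_c(0)=1,
\]
and with $Y_c(t)=p_c\alpha_c(t)$ and $S(t)=\sum_c Y_c(t)$ this becomes the autonomous system $Y_c'=-Y_c(S-Y_c)$.  The target quantity is $\lambda=Y_1(\infty)$.  In the symmetric subcase $p_2=\cdots=p_k=q$, symmetry of the initial data collapses the system to two functions $X=Y_1$ and $Y=Y_2=\cdots=Y_k$ satisfying $X'=-(k-1)XY$ and $Y'=-Y(X+(k-2)Y)$.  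Eliminating time and substituting $v=Y/X$ gives the linear ODE $X\,dv/dX=(1-v)/(k-1)$, which integrates to $1-v=((p_1-q)/p_1)(p_1/X)^{1/(k-1)}$.  Sending $v\to 0$ (equivalently $Y\to 0$) yields $X(\infty)=(p_1-q)^{k-1}/p_1^{k-2}$, the claimed formula.

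For the general assertion $\lambda>0$ I would extract two structural facts from the ODE.  First, $(Y_1-Y_c)'=-(Y_1-Y_c)\sum_{c'\neq 1,c}Y_{c'}$ preserves the sign of $Y_1-Y_c$, so $Y_1(t)>Y_c(t)$ for all $t$ and all $c\neq 1$.  Second, $\alpha_c(\infty)=\exp\bigl(-\int_0^\infty\sum_{c'\neq c}p_{c'}\alpha_{c'}\bigr)$ combined with the monotonicity of each $\alpha_{c'}$ implies that if $\alpha_c(\infty)>0$ then $\alpha_{c'}(\infty)=0$ for every $c'\neq c$; hence at most one coordinate can have a strictly positive limit, and by the first fact it must be $Y_1$.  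What remains is to rule out the all-zero fixed point.  When $p_1>1/2$ this is immediate from a mass-transport lower bound $\lambda\ge 2p_1-1$.  The harder regime is $\max_{c\neq 1}p_c<p_1\le 1/2$, where positivity truly uses the stable-matching structure rather than just densities; I expect to treat it by passing to normalized coordinates $\tilde Y_c=Y_c/S$ under the time change $d\tau=S\,dt$, obtaining the autonomous replicator-type dynamics $d\tilde Y_c/d\tau=\tilde Y_c\bigl(\tilde Y_c-\sum_{c'}\tilde Y_{c'}^2\bigr)$ whose trajectories concentrate on the majority coordinate, then extracting a fast enough rate of concentration to conclude $\int_0^\infty W\,dt<\infty$ for $W=\sum_{c\neq 1}Y_c$ and hence $Y_1(\infty)=p_1\exp\bigl(-\int_0^\infty W\bigr)>0$.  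This quantitative comparison step is the main analytic obstacle.

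Finally, the PWIT identity $Y_1(\infty)=\lambda$ transfers to $\R^d$ via the coupling developed for Theorem~\ref{thm:asymmetricRd}: as $d\to\infty$, the local exploration determining whether a typical point is matched converges, up to arbitrarily small error, to the corresponding exploration on the PWIT, so the intensity of unmatched type-$1$ points in $\R^d$ converges to $\lambda$.
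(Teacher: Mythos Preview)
Your setup of the PWIT system and the reduction to the ODE $Y_c'=-Y_c(S-Y_c)$ with $Y_c(0)=p_c$ is correct and coincides with the paper's (the paper writes $x_i$ for your $Y_i$).  Your computation in the symmetric subcase $p_2=\cdots=p_k$ is equivalent to the paper's, via a slightly different substitution, and the transfer to $\R^d$ via the coupling is the same as in the paper.

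The genuine gap is the positivity of $\lambda=Y_1(\infty)$ in the general case $p_1>\max_{c\ge 2}p_c$.  You correctly observe that at most one coordinate can have a positive limit and that it must be $Y_1$, but this does not exclude $Y_1(\infty)=0$.  Your proposed route via replicator dynamics is not carried out, and you flag it yourself as the ``main analytic obstacle''; it is also more involved than necessary.  The paper closes this gap with two elementary ingredients that are close to what you already have.  First, a direct computation gives
\[
\frac{d}{dt}\Bigl(Y_1-\sum_{c\ne 1}Y_c\Bigr)=\sum_{\substack{c,c'\ge 2\\ c\ne c'}}Y_cY_{c'}\ge 0,
\]
so $Y_1-\sum_{c\ne 1}Y_c$ is non-decreasing for all $t$ (heuristically: every match uses at most one type-$1$ point and at least one other).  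Second, one upgrades $Y_c\to 0$ for $c\ne 1$ to the stronger $Y_c/Y_1\to 0$: taking $p_2=\max_{c\ge 2}p_c$, setting $X_i=-\log Y_i$ and $Z=X_2-X_1$, one finds
\[
\frac{dZ}{dX_2}\ \ge\ 1-\frac{k-1}{e^Z+k-2},
\]
which is positive since $Z(0)>0$, and increasing in $Z$; since $X_2\to\infty$ this forces $Z\to\infty$, i.e.\ $Y_2/Y_1\to 0$, and similarly for every $c\ge 2$.  Hence there is a finite time $t_0$ with $Y_1(t_0)>\sum_{c\ne 1}Y_c(t_0)$, and monotonicity of the difference then gives $Y_1(\infty)\ge Y_1(t_0)-\sum_{c\ne 1}Y_c(t_0)>0$.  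This replaces your unfinished replicator-dynamics step entirely.
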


Our method to prove Theorems \ref{thm:asymmetricRd} and \ref{thm:symmetricRd}
involves the analysis of stable matchings on the
\textit{Poisson-weighted infinite tree} (or \textit{PWIT}).
The PWIT was introduced by Aldous and Steele \cite{AldousSteele}
and has been used in many contexts to provide a scaling limit
of complete graphs with independent edge-weights,
for example in the setting of
minimal-weight spanning trees and invasion percolation
\cite{ABGriffKang,AldousSteele,SteeleMST},
random assignment problems \cite{AldousRA, SalezShah},
and random matrices \cite{BCC1, BCC2, BordenaveChafai}.
Here we show how it also arises naturally as a limit
of Poisson processes in high-dimensional Euclidean space,
under appropriate rescaling; as far as we know,
this is the first such application of the PWIT.

We make some brief observations illustrating some of the difficulty of obtaining results about the models considered in Theorems \ref{thm:asymmetricRd}
and \ref{thm:symmetricRd}.

In the symmetric model of Theorem \ref{thm:symmetricRd},
note that for any $i$, the probability that there
are unmatched points of type $i$ is 0 or 1, by ergodicity. If $p_i=p_j$
then by symmetry this probability is the same for $i$ and for $j$, but there can't be unmatched points of two different colours,
so the probability must be 0. Consider for example $k=3$ and $p_1<p_2=p_3$. By the above argument, with probability 1 there are no unmatched points of type $2$ or $3$. One would naturally conjecture that also no points of type $1$
(which has lower intensity) are unmatched -- however,
there is no obvious monotonicity for the model in Euclidean space, and this
conjecture does not seem easy to prove, even taking $d$ large. (Our comparison with the PWIT could be used
to show that the intensity of unmatched points of type 1 can be made
as small as desired by taking $d$ sufficiently large, but it is not clear that it will help in showing that the density is 0 for some $d$.)

Meanwhile for the asymmetric two-type model, Theorem \ref{thm:asymmetricRd}
implies that for given $p_0>0$,
for sufficiently large $d$ there exist
unmatched blue points with probability 1
for any probability $p>p_0$ of blue points.
One might naturally imagine that for any fixed $d$,
if there are unmatched blue points for some given $p_0$,
then the same is true for any $p>p_0$.
However, this monotonicity property also appears difficult to prove.
(Note that one can easily find finite
configurations of points such that removing a
blue point increases the number of unmatched blue points.)

Underlying both 
Theorem \ref{thm:asymmetricRd} and Theorem \ref{thm:symmetricRd}
is a more general property, namely
that for the model of a stable matching
of a rate-$1$ Poisson process in
$\R^d$ with a given 
distribution of colours and a given rule about which 
colour-pairings are allowed,
the intensity of unmatched points of a given colour
converges as $d\to\infty$;
its limit can be identified 
as the probability,
in a corresponding stable matching model 
on the PWIT,
that the root has the  
given colour and is unmatched. 
We state
this more general result after we have given
a formal definition of the PWIT and the stable matching model
on it; see Theorem \ref{thm:meta} at the end of 
Section \ref{sec:conclusionRd} for the formulation. 

\subsection{Asymmetric two-type matching on a
hierarchical graph}\label{subsec:hierarchical}

We also consider the asymmetric two-type model in a case where
the distance function is given by a hierarchical metric.
In this case we can indeed show that there are unmatched points
for every value of $p$ (as we conjectured above for the stable matching
with respect to Euclidean distance in $\R^d$).

Consider the distance on $\R_+$ defined by
\begin{equation}\label{rhodef}
\rho(x,y)=2^{-\sup\{k\in\Z:\lfloor2^{k}x\rfloor=\lfloor2^{k}y\rfloor\}}.
\end{equation}
This is an ultrametric (that is, a metric such that
$\rho(x,z)\leq\max\{\rho(x,y), \rho(y,z)\}$ for all $x,y,z$).

Let $p\in(0,1)$.
Consider a Poisson process of rate $\lambda>0$ on $\R_+$.
As above, let each point independently be coloured
blue with probability $p$ and red with probability $1-p$;
red-red and red-blue matches are allowed, but not blue-blue.

\begin{thm}
\label{thm:hierarchical}
Fix $\lambda>0$ and $p\in(0,1)$.
Consider the two-type asymmetric stable matching model for
a Poisson process on $\R_+$ with rate $\lambda$,
in which each point is independently blue with
probability $p$ and
red with probability $1-p$.
With probability 1, there
are infinitely many stable matchings
with respect to the hierarchical metric $\rho$,
and all these matchings have infinitely many
unmatched blue points.
In fact, there exists $c=c(\lambda, p)>0$
such that, with probability 1, for all large enough $R$
the number of unmatched blue points
in the interval $[0,R]$ is at least $cR$ for
all stable matchings.
\end{thm}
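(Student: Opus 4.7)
My plan is to exploit the tree structure of the ultrametric $\rho$: the distance between two points equals the length of the smallest dyadic interval containing both, so any match in any stable matching lies inside a unique dyadic interval and the global matching decomposes recursively along the binary tree of dyadic intervals. I would first introduce, for each dyadic interval $I$, the ``excess''
\[
X_I \;:=\; \#\{\text{unmatched blue points in } I\} - \#\{\text{unmatched red points in } I\}
\]
under the stable matching restricted to the Poisson points in $I$. A short induction on the tree shows that (i) $X_I \in \{-1, 0, 1, 2, \ldots\}$, since two unmatched reds in the same minimal dyadic interval would form an unstable pair; (ii) $X_I$ is invariant across all stable matchings on $I$, since only the identities of matched points depend on tie-breaking; and (iii) if $I_L, I_R$ are the two halves of $I$, then
\[
X_I \;=\; X_{I_L} + X_{I_R} + 2\,\ind[X_{I_L} = X_{I_R} = -1],
\]
the correction $+2$ recording the forced red--red match between the two leftover reds. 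The number of unmatched blue points in $I$ is then $\max(X_I, 0)$.

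The heart of the proof is to show $\nu_\infty := \lim_{\ell \to \infty} \ell^{-1}\,\E[X_{[0,\ell]}] > 0$ for every $p \in (0, 1)$. Taking expectations in the recursion gives $\E[X_\ell] = 2\,\E[X_{\ell/2}] + 2\alpha_{\ell/2}^2$ with $\alpha_\ell := \P(X_\ell = -1)$, and iteration yields
\[
\E[X_\ell] \;=\; (2p - 1)\lambda\ell \;+\; \sum_{j \ge 1} 2^j\,\alpha_{\ell/2^j}^2,
\]
where the sum equals twice the expected total number of red--red matches in $[0, \ell]$. For $p \ge 1/2$ the first term already gives $\nu_\infty > 0$. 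For $p < 1/2$ the task is to show that the hierarchical preference produces enough red--red pairings at fine scales; I would approach this by a direct analysis of the fixed points of the recursion on distributions of $X_\ell$, ruling out the degenerate fixed point supported on $\{-1,0\}$ (with $\pi(0)=\pi(-1)=1/2$ and $\E[X]=-1/2$) by exploiting the strictly positive blue input, or equivalently by coupling against the blue-free hierarchical red pairing and bounding the disruption to red--red matches caused by each blue. Once $\nu_\infty > 0$ is established, a variance bound $\mathrm{Var}(X_\ell) = O(\ell)$ (again from the recursion) promotes the linear growth of the mean to $X_\ell / \ell \to \nu_\infty$ almost surely along $\ell = 2^m$; in particular $\alpha_\ell = O(1/\ell)$, so $\sum_k \alpha_{2^k} < \infty$.

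To pass from restricted matchings to the full stable matching on $\R_+$, I would observe that if any blue $b \in [0, 2^m)$ is unmatched in the $\R_+$ matching, then no red $r \in [0, 2^m)$ can be matched outside $[0, 2^m)$: indeed $|b - r| \le 2^m$ while $r$'s outside partner is at distance $\ge 2^{m+1}$, so $(b, r)$ would be unstable. Consequently the full matching differs from the restricted matching on $[0, 2^m)$ only by interior blues being stolen by outside reds, and each such steal requires an ancestor level $k \ge m$ at which the sibling interval $[2^k, 2^{k+1})$ has state $-1$; Borel--Cantelli combined with $\sum_k \alpha_{2^k} < \infty$ then gives that the total number of steals is a.s.\ finite. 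Combined with $X_{[0, 2^m)} \ge (\nu_\infty - \varepsilon)\,2^m$ a.s.\ for large $m$, this yields at least $cR$ unmatched blues in $[0, R]$ for all sufficiently large $R$, with $c = c(\lambda, p) > 0$. Infinitely many stable matchings then follow from Borel--Cantelli applied to the positive-probability tie events (a sub-interval with $X \ge 2$ whose sibling has state $-1$), which occur at independent dyadic positions and each allow a free choice of which blue matches the lone red. \textbf{The main obstacle} is establishing $\nu_\infty > 0$ for $p < 1/2$: the recursion admits a degenerate fixed-point distribution with $\E[X] < 0$, and one must exploit the strictly positive blue rate to argue the system escapes this fixed point and instead drifts to linear growth.
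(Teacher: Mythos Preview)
Your setup and recursion for $X_I$ are correct and identical to the paper's $N_k(m)$, and your endgame (variance bound, concentration, Borel--Cantelli to pass from restricted to global matchings) is close to the paper's, which instead uses the superadditive bound $N_k(m)\ge N_{k-1}(2m)+N_{k-1}(2m+1)$ together with a large-deviations estimate for i.i.d.\ sums to get exponential tails directly.

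The gap is precisely the one you flag yourself: you do not actually prove $\nu_\infty>0$ for $p<1/2$. Your expectation identity $\E[X_\ell]=(2p-1)\lambda\ell+\sum_{j\ge 1} 2^j\alpha_{\ell/2^j}^2$ begins with a negative linear term, and neither of your suggested routes (fixed-point analysis, or coupling to the blue-free system) is developed enough to overcome this; the $\{-1,0\}$ fixed point you identify shows that some nontrivial mechanism is required. The paper's device is a \emph{parity} argument. Writing $\beta_k=\P(N_k\text{ even})$, $\gamma_k=\P(N_k\text{ odd and }>0)$, $\delta_k=\P(N_k\text{ even and }>0)$, one first checks $\beta_k=\beta_{k-1}^2+(1-\beta_{k-1})^2\ge\tfrac12$ for all $k$, then $\gamma_{k+1}\ge 2\gamma_k\beta_k\ge\gamma_k$ and $\delta_{k+1}\ge\gamma_k^2$, and these combine to give
\[
\gamma_{k+2}\ \ge\ \gamma_k+\tfrac13\,\gamma_k^2\qquad\text{whenever }\gamma_{k+1}\le\tfrac13.
\]
Since $\gamma_0>0$ for every $p\in(0,1)$ (a unit dyadic interval contains exactly one blue point and no reds with positive probability), this recursion drives $\gamma_k$ up to at least $\tfrac13$ in finitely many steps, after which $\E N_k\ge\tfrac16$. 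The point is that parity decouples the evolution of $\gamma_k$ from the mass sitting at $-1$: your degenerate fixed point has $\gamma=0$, and any positive seed $\gamma_0$ escapes it. This is the missing ingredient in your proposal.
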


There exist multiple stable matchings with respect to $\rho$
since a point can be equidistant from several others.
However, these stable matchings are all closely related
to each
other in the following way.
Take a dyadic interval $[2^km,2^k(m+1))$ for some integers $m\geq0$ and $k$,
and, for a given stable matching,
consider the set of points in the interval that are
not matched to another point in the interval
(equivalently, that are not matched at distance $2^k$ or less).
This set cannot include both a red point and a blue point,
and also cannot include two red points.
We write $N_k(m)$ for the number of blue points minus the number
of red points in the set, which is in $\{-1,0,1,2,\dots\}$.

Then we have
\begin{equation}
\label{hierarchicalrecursion}
N_k(m)=g\left(N_{k-1}(2m), N_{k-1}(2m+1)\right)
\end{equation}
where $g:\{-1,0,1,2,\dots\}^2 \mapsto \{-1,0,1,2,\dots\}$
is defined by
\[
g(a,b)=
\begin{cases}0&\text{if }a=b=-1\\a+b&\text{otherwise}\end{cases}.
\]
Also $N_k(m)$ is uniquely determined whenever the
interval has at most one point. By starting from a partition into dyadic intervals each of which contain at most one point
and applying (\ref{hierarchicalrecursion}) recursively, we obtain that the values $N_k(m)$ are in fact the same for any stable matching.

By translation invariance, $N_k(m), m\in\Z$ are i.i.d.\
for every $k$, and (\ref{hierarchicalrecursion}) yields a recursion
in $k$ for the distribution of $N_k(m)$, which we analyse
in order to prove Theorem \ref{thm:hierarchical}.

One could also consider a version in $\R^d$ for $d\geq 2$
based on dyadic $d$-cubes rather than dyadic intervals,
or indeed a more general model $p$-adic model for all $p\geq 2$.
The result and the analysis would be very similar.

\subsection{Related work}

The recent article \cite{amir-angel-holroyd} treats general (not necessarily stable)
\linebreak[4]
translation-invariant matchings of Poisson processes of multiple colors under arbitrary color-matching rules (and indeed generalized matchings in which three or points may be matched to each other).  The optimal tail behavior of such matchings in $\R^d$ is analyzed in terms of the dimension $d$, the matching rule, and the color intensities.  It turns out that convex geometry in the space of intensity vectors plays a key role.

In \cite{frogs}, stable matchings of various kinds are shown to be intimately tied to two-player games.  In particular, the asymmetric matching rule of Theorem~\ref{thm:asymmetricRd} is related to a game called ``fussy friendly frogs'': Alice places a frog on a point of the Poisson process, then Bob places another frog on a distinct point; subsequently the players take turns to move either frog in such a way that the two frogs get closer, but they are never allowed to be both on blue points or on the same point; a player with no legal move loses.  Theorem~\ref{thm:asymmetricRd} implies that this game is a first-player win in sufficiently high dimension.

\subsection{Plan of the paper}
In Section \ref{sec:uniqueness} we set up
a formal framework for stable matchings on weighted graphs,
and give a result which guarantees existence and
uniqueness of the stable matching for several of the
multi-type models that we consider in the paper. The
required conditions are that the weights involving
any given vertex are distinct and have no accumulation points, and that there are no
infinite \textit{descending paths}
(in the sense of \cite{DaleyLast}).

In Section \ref{sec:PWIT} we describe the PWIT
and explain how it arises as a scaling
limit of high-dimensional Poisson processes. We then
investigate various stable matching models on the PWIT
(where the recursive structure of the graph makes
certain exact computations possible).

The comparison between the stable matching models for the PWIT
and for the Poisson process in $\R^d$ is formally developed in
Section \ref{sec:coupling}.
In Sections \ref{sec:conclusionRd}, \ref{sec:conclusionhierarchical}
and \ref{sec:stableproof} we complete the proofs
of the main results.  We conclude the article with some open problems.

\section{Stable matching on general weighted graphs}
\label{sec:uniqueness}
In this section we give a result guaranteeing the
existence and uniqueness of a stable matching
for a general class of models based on a symmetric
distance function (which need not be a metric),
or, in other words, an edge-weighted graph.
In particular, the framework will
cover the various multi-type models considered above.
(A pair of points whose types are incompatible
will not be joined by an edge in the graph.)

Suppose we have a set $V$ and a symmetric function
$\ell:V\times V\to \R_+\cup\{\infty\}$
with $\ell(x,x)=\infty$ for all $x$.
We call $\ell(x,y)$ the \textbf{weight} associated
to the pair $\{x,y\}$;
we think of it as the weight of the edge $\{x,y\}$
in a weighted graph with vertex set $V$
and vertex set $E_\ell:=
\big\{\{x,y\}:\ell(x,y)<\infty\big\}$,
with the case $\ell(x,y)=\infty$
corresponding to the absence of an edge.

Let a \textbf{matching} of $(V,E_\ell)$
be a function $M:V\to V$ that is an involution,
i.e.\ $M(M(x))=x$ for all $x\in V$,
and such that $\ell(x,M(x))<\infty$ whenever
$M(x)\ne x$.

If $x\ne y$ and $M(x)=y$ (in which case also $M(y)=x$) then
we say that $x$ and $y$ are \textbf{matched} by $M$
(or that $y$ is the \textbf{partner} of $x$ in $M$);
if $M(x)=x$ then we say that $x$ is \textbf{unmatched} by $M$.

Given a matching $M$, define $d_M:V\to\R_+\cup\{\infty\}$ by
\begin{equation}\label{dMdef}
d_M(x)=
\ell\big(x,M(x)\big).
\end{equation}
The matching $M$ of $(V,E_\ell)$ is \textbf{stable} (with
respect to the function $\ell$)
if
\begin{equation}
\label{stablecondition}
\ell(x,y)\geq\min\left(d_M(x), d_M(y)\right)
\text{ for all $x$ and $y$.}
\end{equation}

We can interpret this definition as follows. Each point $x$ has
an order of preference among the other points; it prefers
to have a partner $y$ such that $\ell(x,y)$ is as
small as possible (but will remain unmatched rather than
being matched to another $y$ with $\ell(x,y)=\infty$).

\begin{prop}\label{prop:uniquestable}
Let $V$ be finite or countably infinite.
Suppose that the function $\ell$ satsfies the following conditions.
\begin{itemize}
\item[(i)]Distinct weights:
there are no $x,y,z\in V$ with $y\ne z$ such that $\ell(x,y)=\ell(x,z)<\infty$.
\item[(ii)]
Locally finite: for all $x\in V$ and all $r<\infty$,
the set $\{y\in V:\ell(x,y)<r\}$ is finite.
\item[(iii)]
No infinite descending paths:
there is no sequence of elements $x_0, x_1, x_2, \dots$ of $V$
such that $\ell(x_0, x_1)>\ell(x_1, x_2)>\ell(x_2, x_3)>\dots$.
\end{itemize}
Then there exists a unique stable matching $M$ of $(V,E_\ell)$.
If $x$ and $y$ are two points both left unmatched by $M$, then $\ell(x,y)=\infty$.
\end{prop}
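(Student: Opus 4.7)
My plan is to prove existence and uniqueness by an iterative peeling of \textbf{mutual nearest pairs}: a pair $\{x,y\}$ in a set $V' \subseteq V$ such that each of $x, y$ is the other's unique nearest finite-weight $V'$-neighbor (well-defined thanks to (i) and (ii)). The first thing I would check is that in any stable matching $M$, any mutual nearest pair $\{x, y\}$ in $V$ itself must satisfy $M(x) = y$. If not, then by (i) we have $d_M(x) > \ell(x,y)$ (either $x$ is unmatched so $d_M(x) = \infty$, or $M(x) = z \ne y$ and distinct weights give $\ell(x,z) > \ell(x,y)$), and symmetrically $d_M(y) > \ell(x,y)$, so $\min(d_M(x), d_M(y)) > \ell(x,y)$, contradicting (\ref{stablecondition}).

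Next I would prove the key structural lemma: any nonempty $V' \subseteq V$ containing some finite-weight edge contains a mutual nearest pair. Pick any $x_0 \in V'$ having a finite-weight edge and iteratively let $x_{i+1}$ be the unique nearest $V'$-neighbor of $x_i$. Either $x_{i+1} = x_{i-1}$ at some step, in which case $\{x_{i-1}, x_i\}$ is a mutual nearest pair, or by (i) the sequence $\ell(x_0, x_1), \ell(x_1, x_2), \ldots$ is strictly decreasing, producing an infinite descending chain forbidden by (iii).

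With these in hand I construct $M$ by (transfinite) iteration: set $V_0 = V$; at successor stages match every mutual nearest pair of $V_\alpha$ into $M$, declare every vertex of $V_\alpha$ with no finite-weight $V_\alpha$-edge as unmatched in $M$, and let $V_{\alpha+1}$ be the remainder; at limit stages take intersections (noting that (i)--(iii) are inherited by all subsets). The structural lemma forces $V_{\alpha+1} \subsetneq V_\alpha$ whenever $V_\alpha$ is nonempty, so the iteration terminates with $V_\alpha = \emptyset$ and every $x$ has a \emph{removal stage} $\beta(x)$. Stability of $M$ is then immediate: for any $x, y$ with $\beta(x) \le \beta(y)$, the vertex $y$ still belongs to $V_{\beta(x)}$, so either $x$ is isolated at that stage (whence $\ell(x,y) = \infty$) or $M(x)$ is the nearest $V_{\beta(x)}$-neighbor of $x$ and $d_M(x) \le \ell(x,y)$.

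For uniqueness I would argue by transfinite induction on $\beta(x)$. Assuming a stable $M'$ agrees with $M$ on all $z$ with $\beta(z) < \beta$, take $x$ with $\beta(x) = \beta$ and suppose $M'(x) = z$. If $\beta(z) < \beta$ then the inductive hypothesis gives $M'(z) = M(z) \ne x$, a contradiction; hence $z \in V_\beta$, and the mutual-nearest-pair (or isolation) argument from the first paragraph, now applied inside $V_\beta$, forces $M'(x) = M(x)$. The final assertion then follows directly from (\ref{stablecondition}): if $x, y$ are both unmatched then $\min(d_M(x), d_M(y)) = \infty$, ruling out $\ell(x,y) < \infty$. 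The one place where some care is needed is the behavior of the iteration at limit ordinals, but since hypotheses (i)--(iii) pass to subsets the structural lemma continues to apply there, and I do not anticipate a genuine obstacle.
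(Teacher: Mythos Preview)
Your proof is correct and follows the mutual-nearest-pair peeling approach that the paper itself mentions (attributing it to \cite{HPPS}) but deliberately sets aside. The paper instead proves Proposition~\ref{prop:uniquestable} jointly with Propositions~\ref{prop:whoismatched} and~\ref{prop:stablerescaled}, by induction on the size of the finite edge-set $E_R^\downarrow(x)$ and using the equivalent characterization (\ref{dMprop}) of stability from Lemma~\ref{lemma:stableprop}. The advantage of the paper's route is that the descending-path locality---whether $d_M(x)<R$ is determined by $E_R^\downarrow(x)$ alone---drops out of the same induction, and this locality is precisely what drives the coupling between $\R^d$ and the PWIT in Section~\ref{sec:coupling}. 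Your argument is arguably more transparent for Proposition~\ref{prop:uniquestable} in isolation, and your transfinite bookkeeping correctly handles the fact that peeling need not terminate in $\omega$ steps; one small point worth making explicit is that in the uniqueness step, ``applied inside $V_\beta$'' relies on first observing that $M'|_{V_\beta}$ is a stable matching of $(V_\beta,\ell)$, which follows once the inductive hypothesis has forced $M'(x)\in V_\beta$ for every $x\in V_\beta$.
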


Proposition \ref{prop:uniquestable} applies to the models in Theorems \ref{thm:asymmetricRd} and \ref{thm:symmetricRd}, as well as to the following more general setting.  Let $\mathcal{G}$ be an undirected graph with vertex set $\{1,\ldots,k\}$ with no parallel edges but possibly with self-loops.  An edge between $i$ and $j$ indicates that points of colours $i$ and $j$ are allowed to be matched to each other
(in which case we say that colours $i$ and $j$
are \textbf{compatible}). 
For the asymmetric model of Theorem \ref{thm:asymmetricRd}, $\mathcal{G}$ is one edge with a self-loop at one end; for Theorem \ref{thm:symmetricRd} it is a complete graph without self-loops.  
Given a probability vector $(p_1,\dots, p_k)$, 
consider a Poisson process on $\R^d$ of intensity 1;
let $V$ be the set of all the points of the process,
and let each $x\in V$ independently be given colour $i$
with probability $p_i$, for $1\leq i\leq k$. 
For two points $x,y\in V$ of respective colours $i$ and $j$ we let $\ell(x,y)=|x-y|$ if $i$ and $j$
are compatible, and $\ell(x,y)=\infty$ otherwise.


In the above setting, conditions (i) and (ii) in Proposition
\ref{prop:uniquestable} hold with probability 1
by basic properties of
the Poisson process. The fact that the Poisson process
has no infinite descending paths
with probability 1 is a special case
of Theorem 4.1 of Daley and Last \cite{DaleyLast},
so condition (iii) also holds. Hence indeed,
for the models of Theorems \ref{thm:asymmetricRd} and \ref{thm:symmetricRd}, with probability 1 there exists
a unique stable matching, and the same is true in the more general setting given in Theorem \ref{thm:meta}
at the end of Section \ref{sec:conclusionRd},

We will also apply Proposition \ref{prop:uniquestable} to the PWIT
in Section \ref{sec:PWIT} and to a variant of the
hierarchical metric on $\R$ in Section \ref{sec:conclusionhierarchical}.


We also make a useful observation about the
information necessary to determine whether or not
a given vertex $x$ is matched within some given distance $R$.
A \textbf{descending path} from $x$ with weights less than $R$
is a sequence $x_0, x_1, x_2, \dots, x_k$ with $x_0=x$
and
\begin{equation}\label{descendingdef}
R>\ell(x_0, x_1)>\ell(x_1, x_2)>\dots>\ell(x_{k-1}, x_k).
\end{equation}
Let $V^{\downarrow}_R(x)$ and $E^{\downarrow}_R(x)$ be the sets of
all vertices and respectively all edges
that are contained in any such path.

\begin{prop}\label{prop:whoismatched}
If conditions (i), (ii), (iii) of Proposition \ref{prop:uniquestable}
hold, then for all $x\in V$ and all $R>0$, the set $E^{\downarrow}_R(x)$ is finite. To determine whether $x$ is matched along an
edge of weight less than $R$ in the stable matching
(i.e.\ whether $d_M(x)<R$ where $M$ is the stable matching)
it suffices to know $E^{\downarrow}_R(x)$
and the collection of edge-weights
$\{\ell(y,z): \{y,z\}\in E^{\downarrow}_R(x)\}$.
\end{prop}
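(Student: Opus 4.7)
The two claims are addressed in turn.

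For the finiteness of $E^{\downarrow}_R(x)$, the natural approach is K\"onig's lemma. Consider the rooted tree $\mathcal{T}$ whose nodes are the descending sequences $(x_0, x_1, \ldots, x_k)$ with $x_0 = x$ and $R > \ell(x_0, x_1) > \cdots > \ell(x_{k-1}, x_k)$, with the child relation given by one-step extension. Condition~(ii) makes $\mathcal{T}$ finitely branching: extending $(x_0, \ldots, x_k)$ requires picking $x_{k+1}$ in the finite set $\{y : \ell(x_k, y) < \ell(x_{k-1}, x_k)\}$ (or in $\{y : \ell(x, y) < R\}$ at the root). Condition~(iii) forbids any infinite branch. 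So $\mathcal{T}$ is finite by K\"onig's lemma, and the edges $\{x_{j-1}, x_j\}$ appearing as consecutive pairs in its nodes, which form precisely $E^{\downarrow}_R(x)$, are therefore finite in number.

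For the second claim, I would design a procedure that, from only the weighted graph $\bigl(V^{\downarrow}_R(x), E^{\downarrow}_R(x), \ell\bigr)$, decides whether $d_M(x) < R$ and, if so, identifies $M(x)$. Define recursively, for a vertex $u$ and threshold $r$, an element $\widehat{M}(u, r) \in V \cup \{\star\}$ as follows: list the neighbors $v_1, v_2, \ldots$ of $u$ with $\ell(u, v_i) < r$ in strictly increasing order of weight; compute $\widehat{M}(v_1, \ell(u, v_1)), \widehat{M}(v_2, \ell(u, v_2)), \ldots$ in order, and return the first $v_i$ for which this value is $\star$, or $\star$ if no such $v_i$ exists. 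A subcall $\widehat{M}(v_i, \ell(u, v_i))$ only inspects edges incident to $v_i$ of weight less than $\ell(u, v_i)$; unrolling the recursion, every edge touched by the initial call $\widehat{M}(x, R)$ lies on a descending path from $x$ with weights less than $R$, so the procedure consults only the promised data. Termination is guaranteed because each recursive call sits at a strictly smaller descending-path tree, all of which are finite by the first part.

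To see that the procedure computes $M$, I would induct on the size of the descending-path tree at $(u, r)$. If $M(u) = v$ with $\ell(u, v) = r^* < r$, then for every $v_i$ with $\ell(u, v_i) < r^*$, stability forces $d_M(v_i) < \ell(u, v_i)$ (else $\{u, v_i\}$ would be an unstable pair, since $d_M(u) = r^* > \ell(u, v_i)$), and the inductive hypothesis gives $\widehat{M}(v_i, \ell(u, v_i)) \neq \star$; meanwhile $d_M(v) = r^*$, so $\widehat{M}(v, r^*) = \star$ by induction, and the algorithm returns $v$. The converse direction is analogous, again invoking stability of $M$. The main subtlety is arranging the induction cleanly: since the thresholds $r$ range over the reals, one does not induct on $r$ directly but on the finite size of the descending-path tree at $(u, r)$, which is furnished by the first part and strictly decreases on each recursive call. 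Existence and uniqueness from Proposition~\ref{prop:uniquestable} let us speak unambiguously of \emph{the} stable matching $M$ throughout.
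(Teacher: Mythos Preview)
Your proposal is correct and follows essentially the same route as the paper. For finiteness of $E^{\downarrow}_R(x)$ you invoke K\"onig's lemma explicitly where the paper says ``by compactness''; for determinacy you induct on the size of the descending-path set, which is exactly what the paper does (the paper packages the inductive step via an equivalent characterisation of stability---its Lemma just before the proof---whereas you argue directly from the definition of stability, but the content is the same).
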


Finally, note that the definition of stable matching
only uses the relative ordering of edge-weights. If
the weights are rescaled
by applying the same strictly increasing
function to each finite weight, the set of stable
matchings does not change. Combining this with Proposition
\ref{prop:whoismatched}, we can in fact transfer information
about stable matchings from one graph to another, if
the local structure of the sets of descending paths
agrees in a suitable sense:
\begin{prop}\label{prop:stablerescaled}
Suppose that the set
$V$, with associated edge-weight function $\ell$, and
the set $\widetilde{V}$, with associated edge-weight function
$\widetilde{\ell}$, both satisfy conditions (i), (ii), (iii)
of Proposition \ref{prop:uniquestable}.
Let $M$ and $\widetilde{M}$ be the stable matchings of $V$
and $\widetilde{V}$ respectively.
Given $\widetilde{x}\in\widetilde{V}$ and $\widetilde{R}>0$, define
$\widetilde{V}_{\widetilde{R}}^{\downarrow}(\widetilde{x})\subseteq \widetilde{V}$
and $\widetilde{E}_{\widetilde{R}}^{\downarrow}(\widetilde{x})$
with respect to the function
$\widetilde{\ell}$ in the same way that $V_R^{\downarrow}(x)\subseteq V$
and
$E_R^{\downarrow}(x)$
were defined
with respect to the function $\ell$.

Let $f$ be a strictly increasing function $f:\R_+\cup\{\infty\}
\to\R_+\cup\{\infty\}$ such that $f(\infty)=\infty$.
Suppose there is a bijection
$\phi$ from $V_R^{\downarrow}(x)$ to $\widetilde{V}_{f(R)}^{\downarrow}(\widetilde{x})$
with $\phi(x)=\widetilde{x}$,
such that for each $u,v\in V_R^{\downarrow}(x)$,
$\{u,v\}\in E_R^{\downarrow}(x)$
iff $\{\phi(u),\phi(v)\}
\in \widetilde{E}_{\widetilde{R}}^{\downarrow}$,
and in that case
$\widetilde{\ell}(\phi(u), \phi(v))= f(\ell(u,v))$. Then
$\ell(x,M(x))<R$ iff
$\widetilde{\ell}(\widetilde{x},
\widetilde{M}({\widetilde{x}}))<f(R)$.
\end{prop}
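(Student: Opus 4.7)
The plan is to combine Proposition \ref{prop:whoismatched} with the remark made just before the statement that a stable matching is determined by the pairwise comparisons of edge weights alone. Together these will show that the question ``is $x$ matched at distance less than $R$?'' is computed from the weighted descending-path neighbourhood around $x$ in a purely order-theoretic way, and the hypothesis on $\phi$ and $f$ provides exactly an order-isomorphism between the two neighbourhoods in question.

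First I would apply Proposition \ref{prop:whoismatched} to $(V,\ell)$ at $x$ with radius $R$: the set $E_R^{\downarrow}(x)$ is finite, and whether $\ell(x,M(x))<R$ is a deterministic function $\Psi$ of the finite marked weighted graph $\bigl(V_R^{\downarrow}(x), E_R^{\downarrow}(x), \ell\bigr)$ together with the threshold $R$. Applying the same proposition to $(\widetilde V,\widetilde\ell)$ at $\widetilde x$ with radius $f(R)$ gives the same abstract rule $\Psi$ evaluated on $\bigl(\widetilde V_{f(R)}^{\downarrow}(\widetilde x), \widetilde E_{f(R)}^{\downarrow}(\widetilde x), \widetilde\ell\bigr)$ and threshold $f(R)$; this is legitimate because both sides satisfy conditions (i)--(iii) of Proposition \ref{prop:uniquestable}, so the determination rule does not depend on which graph we are in.

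Next I would use the remark preceding the statement: because the stability condition involves only comparisons between edge weights, applying any strictly increasing function to all weights of the subgraph while simultaneously replacing the threshold $R$ by its image leaves the value of $\Psi$ unchanged. The hypothesis supplies a bijection $\phi\colon V_R^{\downarrow}(x)\to\widetilde V_{f(R)}^{\downarrow}(\widetilde x)$ with $\phi(x)=\widetilde x$, sending $E_R^{\downarrow}(x)$ bijectively onto $\widetilde E_{f(R)}^{\downarrow}(\widetilde x)$ and satisfying $\widetilde\ell(\phi(u),\phi(v)) = f(\ell(u,v))$ on these edges. Since $f$ is strictly increasing, this is precisely an order-isomorphism of the two finite marked weighted subgraphs under which the threshold $R$ is mapped to $f(R)$. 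Feeding the two isomorphic inputs into $\Psi$ therefore produces the same Boolean outcome, which is exactly the desired equivalence $\ell(x,M(x))<R$ iff $\widetilde\ell(\widetilde x,\widetilde M(\widetilde x))<f(R)$.

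The main difficulty has already been absorbed in Proposition \ref{prop:whoismatched}, where the nontrivial content is that the matched-or-not status of $x$ at scale $R$ is determined by its descending-path neighbourhood. Beyond that, the only small points to verify are that the strict monotonicity of $f$ preserves the descending-path structure (so that the hypothesis on $\phi$ is internally consistent and the input to $\Psi$ on the $\widetilde V$ side is indeed the $f$-transported version of the one on the $V$ side), and that the stability condition is genuinely order-theoretic in the weights. Both of these are immediate from the definitions.
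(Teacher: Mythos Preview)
Your proposal is correct and follows essentially the same approach as the paper: both reduce the question to Proposition~\ref{prop:whoismatched}, then observe that the determination of whether $d_M(x)<R$ depends only on the relative ordering of edge weights in the descending-path neighbourhood, which is preserved under the bijection $\phi$ and the strictly increasing function $f$. The paper phrases this as ``the inductive procedures \ldots\ proceed identically'', while you package the same content as an abstract determination rule $\Psi$; these are merely stylistic variants of the same argument.
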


We prove Propositions \ref{prop:uniquestable}, \ref{prop:whoismatched} and \ref{prop:stablerescaled}
in Section \ref{sec:stableproof}.
The proof is based on an inductive construction to identify
edges which must be included in any stable matching,
related to the approach used in \cite{HPPS}
for the special cases of one-type and symmetric two-type
matchings in $\R^d$ with weights given by Euclidean distance.

\pagebreak
\section{The Poisson-weighted infinite tree}
\label{sec:PWIT}
\subsection{Definition of the PWIT}
\label{subsec:PWITdef}
The \textbf{Poisson-weighted infinite tree}, or PWIT,
is an edge-weighted graph with vertex set
\[
\N^{\downarrow}=\bigcup_{k=0}^\infty \N^k=
\{\emptyset,1,2,\dots,11,12,\dots,21,22,
\dots,
111,112,\dots\},
\]
and edges $\{v,vj\}$ for each $v\in\N^{\downarrow}$ and $j\in\N$.
We say that $vj$ is a \textbf{child} of $v$.
For each $v\in\N^{\downarrow}$, let $(t^{(v)}_j: j=1,2,3,\dots)$
be the points of a Poisson process of rate 1 on $\R_+$
in increasing order,
and let these processes be independent for different $v$.
Then let $t^{(v)}_j$ be the weight associated to the edge
$\{v,vj\}$
(which we will also sometimes write as $t(v,vj)$).

The PWIT was introduced by Aldous and Steele \cite{AldousSteele}
and often arises in applications as a scaling
limit of the complete graph with edges weighted
by i.i.d.\ random variables.
We will explain how it also gives a scaling limit
of a Poisson process in high-dimensional Euclidean space.
Stable matchings on the PWIT can be analysed quite precisely,
and we will be able to use them to study the behaviour of stable matchings
in $\R^d$ for large $d$.

First we mention briefly the way in which the PWIT
arises as a limit of the weighted complete graph.
This can be formalised in many different ways;
in particular the framework
of \textit{local weak convergence} is often
used (see for example \cite{AldousSteele}),
but the following less technical approach gives
the essential idea. Consider the complete graph $K_n$
with i.i.d.\ weights attached to the edges
which are, say, exponential with rate $1/n$.
Fix some vertex $v$ and some ``radius" $R>0$,
and consider the subgraph of $K_n$ created by
the collection of all paths from $v$ which have total weight at most $R$.
Similarly we can consider the subtree of the PWIT
created by the collection of all paths from
the root which have total weight at most $R$.
Then for any given $R$, we can couple
the complete graph with the PWIT so that, with
probability tending to 1 as $n\to\infty$, there is an
isomorphism between these two subgraphs which
identifies $v$ with the root of the PWIT, and which preserves the edge weights.

Now we motivate informally the idea of the PWIT
as a limit of the Poisson process in $\R^d$ as $d\to\infty$.
Let
\[
\omega_d=\frac{\pi^{d/2}}{\Gamma\left(1+\frac{d}{2}\right)}.
\]
Then the volume of a ball of radius $r$ in $\R^d$ is $\omega_d r^d$.

Consider a Poisson process of rate $1$ in $\R^d$, as seen from a ``typical point", located at the origin and denoted by $O$. (We will make this notion precise by considering the Palm version of the Poisson process in Section
\ref{sec:coupling}.) The point $O$ will correspond to the root
of the PWIT. Let $x_1, x_2, x_3,\dots$ be the other points of the process,
written in order of their distance from $O$.
Then the sequence $\omega_d|x_1|^d, \omega_d|x_2|^d, \omega_d|x_3|^d,\dots$
forms a Poisson process of rate 1 on $\R_+$. Rescaled in this
way, these distances correspond to the weights
$t^{\emptyset}_1,
t^{\emptyset}_2,
t^{\emptyset}_3,\dots$
on the edges connecting the root of the PWIT to its children.

Note that $\omega_d^{1/d}|x_1|$ converges in probability to 1 as $d\to\infty$. In fact, for any $\epsilon>0$, the probability that there exists a point
of the process within distance $\omega_d^{-1/d}(1-\epsilon)$ of $O$ decays
exponentially with $d$, while the expected number of points within
distance $\omega_d^{-1/d}(1+\epsilon)$ increases exponentially.

Now consider in turn the points closest to $x_1$. Other than the origin,
let these points be $x_{1,1}, x_{1,2}, x_{1,3},\dots$
in order of distance from $x_1$. Similarly rescaled,
their distances from $x_1$ again converge to a Poisson process,
and $\omega_d^{1/d}|x_{1,1}-x_1|$ converges in probability to 1 as $d\to\infty$.
On the other hand, for large $d$, we expect $x_1$ and $x_{1,1}-x_1$
to be approximately orthogonal, so that $x_{1,1}$ is at distance
approximately $\sqrt{2}\omega_d^{-1/d}$ from $O$. In particular, $x_{1,1}$
is not among the nearest neighbours of $O$
(we expect to find exponentially many closer points).

We can extend by considering paths from the origin consisting of distinct points $x_0=O, x_1, x_2, \dots, x_k$, in which each $x_j$ is one of
the $m$ nearest neighbours of $x_{j-1}$. For given $k$
and $m$, with high probability
as $d\to\infty$ no such path ends in a point $x_k$ which is one of the $m$ nearest neighbours of $O$. This explains why the acyclic structure
of the PWIT gives an appropriate limit for the graph of
near neighbours in the Poisson process on $\R^d$.

In this way we could give a result similar to that mentioned for the complete graph above, comparing the structure of the PWIT and the Poisson process restricted to paths of total (rescaled) weight $R$, corresponding to local weak convergence. (This mode of convergence would be sufficient, for example, to obtain convergence of
the minimal spanning tree on the points of a Poisson process in a finite box of $\R^d$ to the minimal spanning forest of the PWIT -- see Theorem 5.4 of \cite{AldousSteele}
for a general result concerning covergence
of the minimal spanning tree under local weak convergence.) However, to analyse stable matchings we need a different mode of convergence, concerning the subgraph obtained
by taking all descending paths from the
root (in the PWIT) or the origin (in $\R^d$)
with weights (or distances) less than $R$;
in the same way as at (\ref{descendingdef}), a descending
path is a path such that the successive edge
weights (or distances) form a decreasing sequence.
We show that with high probability as $d\to\infty$,
the collection of descending paths in the two models
can be coupled so that (after rescaling of distance)
their graph structure is identical in the sense of
Proposition \ref{prop:stablerescaled}.
This will allow us to approximate certain intensities
in the stable matching model in $\R^d$ (for example, the intensity of points of a given type which are not matched by the stable matching) by probabilities involving the matching of the root of the PWIT.

Note that while the edge weights of the PWIT correspond to rescaled distances, we do not think of these weights as defining a graph
distance, or indeed giving any metric. The weight
(corresponding to rescaled distance)
between $\emptyset$ and its child $1$ in the PWIT is $t_1^{\emptyset}
\sim\textrm{Exp}(1)$,
and that between $1$ and its child $11$ is $t_1^1\sim\textrm{Exp}(1)$,
but, asymptotically as $d\to\infty$, the rescaled distance between
$\emptyset$ and its ``grandchild" $11$ goes to $\infty$.

\subsection{Stable matchings on the PWIT}
\label{sec:PWITmodel}
In the rest of the section
we analyse stable matching problems where
the set of points is given by the vertices of the PWIT.

Given a probability vector $(p_1, \dots, p_k)$,
let each vertex of the PWIT have type (or colour) $i$ with
probability $p_i$, independently for different vertices.
(Note then that for each $v$, the weights of edges from
$v$ to its children of colour $i$ form a Poisson process
of rate $p_i$, indepdendently for different $i=1,\dots, k$
and $v\in\N^{\downarrow}$.) As in the spatial models already considered,
the colours determine which pairs of points are allowed to be matched.  To apply Proposition \ref{prop:uniquestable} to the PWIT, let $\mathcal{G}$ be a graph of colour compatibilities as discussed earlier, let $V=\N^\downarrow$ be the set of all vertices of the PWIT, and for any
$v\in \N^\downarrow$ and $j\in\N$ let $\ell(v,vj)=t^{(v)}_j$ if the colors of $v$ and $vj$ are compatible; for all other pairs of vertices $u,v\in\N^\downarrow$ (i.e.\ for incompatible colors or non-neighboring vertices) let $\ell(u,v)=\infty$.  Note in particular that we do not allow non-neighbours in the PWIT to be matched to each other.

From elementary properties of the Poisson process,
with probability 1, all the weights in the PWIT are
distinct, and any vertex has only finitely many edges
with weights falling in any given compact interval.
This gives conditions (i) and (ii) of Proposition
\ref{prop:uniquestable}, while condition (iii)
on the absence of infinite descending chains
will be given by Lemma \ref{lemma:treesize} below.
So in all the cases of interest, with probability 1 there
exists a unique stable matching.

The PWIT has a recursive structure.
The subtrees rooted at each child $1,2,\dots$ of the root
$\emptyset$ have the structure of independent copies of the PWIT.
We can consider the stable matching problem on any of these subtrees.
If $j$ is a child of the root, along an edge with weight $t_j^{(\emptyset)}$,
say that $j$ is ``available" (to the root) if $j$ is not matched
along an edge with weight
less than $t_j^{(\emptyset)}$ in the stable matching of
the subtree rooted at $j$.
Then the root is matched to the nearest of its children
which is both available and has a compatible colour
(and is unmatched if no such child exists).

\subsubsection{One-type matching}
As an introduction we start with the simplest case, where there is only one type of point (so $p_1=1$) and any pair $\{v, vj\}$ with $v\in \N^{\downarrow}$, $j\in \N$ may be matched. That is, $\ell$ is the symmetric function given by
\begin{equation}\label{elldefPWIT}
\ell(v,vj)=t^{(v)}_j,
\end{equation}
and $\ell(v,w)=\infty$ whenever $v$ and $w$ are
not joined by an edge in the PWIT.

For $t\geq 0$, let $x(t)$ be the probability that the root is not matched along an edge with weight less than $t$. By the recursive structure
of the PWIT, conditional on the weights
$t_1^{(\emptyset)}=t_1$, $t_2^{(\emptyset)}=t_2,\dots$
from the root to its children,
these children are available independently with
probabilities $x(t_1), x(t_2), \dots$.
In fact, the process of available children of the root
forms a inhomogeneous Poisson process with rate $x(t)$
on $\R_+$.
The root is matched to the first point of this process.
So $x(t)$ is given by the probability that this process has no points
in $[0,t)$, giving
\[
x(t)=\exp\left\{-\int_0^t x(u) du\right\}.
\]
Hence we have
\begin{equation}
\label{one-type-equation}
x'(t)=-x(t)^2.
\end{equation}
Using $x(0)=1$ gives the exact solution
$x(t)=1/(t+1)$. Since $x(t)\to0$ as $t\to\infty$,
the root is matched with probability 1. (However,
since $\int_0^{\infty}x(t)dt=\infty$,
the weight of the edge along which it is matched has infinite mean.)

\subsubsection{Asymmetric two-type matching}
\label{subsubsec:asymmPWIT}
Now we study the asymmetric two-type model
corresponding to the one studied in Theorem
\ref{thm:asymmetricRd} for points in $\R^d$.
Let each vertex of the PWIT independently be red with probability
$1-\epsilon$ and blue with probability $\epsilon$.
Red-red and red-blue
matches are allowed, but blue-blue matches are not.
That is, the function $\ell$ is defined as at (\ref{elldefPWIT})
except that now $\ell(v,vj)=\infty$ if $v$ and $vj$ are both blue.

Let $r(t)$ be the probability that the root is red
and not matched along an edge of weight less than $t$, and $b(t)$
the probability that the root is blue and not matched
along an edge of weight less than $t$.
By an analogous argument to the one-type case above,
the processes of available red and blue children of the root
form independent inhomogeneous Poisson processes
of rates $r(t)$ and $b(t)$ on $\R_+$.

We have
\begin{gather*}
r(t)=(1-\epsilon)\exp\left\{-\int_0^t \big[r(u)+b(u)\big]du\right\}\\
b(t)=\epsilon\exp\left\{-\int_0^t r(u) du\right\},
\end{gather*}
so that
\begin{align}
\nonumber
r'(t)&=-r(t)\big(r(t)+b(t)\big)\\
\label{rb-equation}
b'(t)&=-b(t)r(t),
\end{align}
with initial conditions $r(0)=1-\epsilon$
and $b(0)=\epsilon$.
Certainly $r(t)\to 0$ as $t\to\infty$ (for example we have $r'(t)<-r(t)^2$, so by comparison
to (\ref{one-type-equation}), $r(t)<1/(t+1)$).
We can ask whether or not there are unmatched blue points;
that is, does $b(t)$ also converge to 0 as $t\to\infty$?

\begin{figure}[t]
\begin{center}
\includegraphics[width=0.8\textwidth]{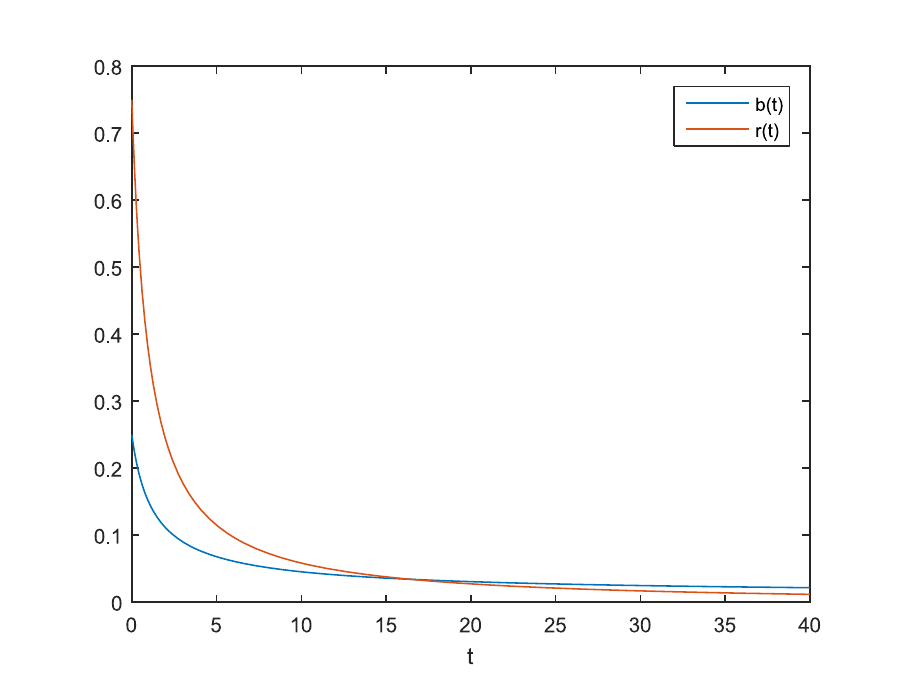}
\caption{
\label{fig:rb}
Numerical solution of the system (\ref{rb-equation})
describing the evolution of the asymmetric two-type system on the PWIT,
with $r(0)=0.75$ and $b(0)=0.25$.
As $t\to\infty$, $r(t)$ decays exponentially to 0
and $b(t)$ converges to $b(\infty)\approx 0.0124$.
}
\end{center}
\end{figure}

Write $R(t)=-\log r(t)$ and $B(t)=-\log b(t)$.
Then $R$ and $B$ are increasing with $t$,
and $R(t)\to\infty$ as $t\to\infty$.
We can derive $B$ as a function of $R$, and ask whether
$B\to\infty$ as $R\to\infty$.

We have
\begin{align*}
R'&=-\frac{r'}{r}=r+b=e^{-R}+e^{-B}\\
B'&=-\frac{b'}{b}=r=e^{-R},
\end{align*}
which gives
\[
\frac{dR}{dB}=\frac{e^{-R}+e^{-B}}{e^{-R}}=1+e^{R-B},
\]
leading to $\frac{d(R-B)}{dB}=e^{R-B}$
which has general solution
\[
R-B=-\log(-B+c).
\]
We see that as $B\uparrow c$, $R\to\infty$.
Hence for the original system $B(t)\to c$ as $t\to\infty$.

To find $c$, we use
$r(0)=1-\epsilon$ and $b(0)=\epsilon$,
so that $R(0)=-\log(1-\epsilon)$ and $B(0)=-\log \epsilon$; this gives
gives $c=\frac{1-\epsilon}{\epsilon}+\log\frac1\epsilon$.

Then $b(t)\to b(\infty)=e^{-B(\infty)}=e^{-c}=\epsilon e^{-1/\epsilon+1}
=e^{-1/\epsilon+1}b(0)$.

So we see that a proportion $e^{-1/\epsilon+1}$ of the blue
points remain unmatched (or, more formally, this is the
conditional probability
that the root remains unmatched, given that it is blue).

\subsubsection{Symmetric multi-type matching}
\label{subsubsec:symm}
Now we turn to the model corresponding to the setting of
Theorem \ref{thm:symmetricRd}, in which there are $k$ types
with probabilities $p_1, \dots, p_k$, and two points may be matched
if their types are different.
Now we define $\ell$ as at (\ref{elldefPWIT}) unless
$v$ and $vj$ have the same colour, in which case $\ell(v,vj)$ is infinite.

Let $x_i(t)$ be the probability that the root itself has type $i$
and is not matched along an edge of weight less than $t$.
As before, the weights of edges from the root leading to available children
of type $i$ form inhomogeneous Poisson processes
of rates $x_i(t)$, independently for $i=1,2,\dots,k$.

Then $x_i(0)=p_i$ and
\begin{equation}\label{symmetric-equation}
x'_i(t)=-x_i(t)\sum_{j\ne i}x_j(t).
\end{equation}
Writing $X_i(t)=-\log x_i(t)$ this gives
\begin{align}\label{deriv}
X_i'(t)&=\frac{-x_i'(t)}{x_i(t)}\\
\nonumber
&=\sum_{j\ne i}x_j(t),
\end{align}
and so
\begin{align*}
X_i'(t)-X_j'(t)&=x_j(t)-x_i(t)\\
&=e^{-X_j(t)}-e^{-X_i(t)}.
\end{align*}
Then if $x_j(t)<x_i(t)$,
or equivalently $X_i(t)<X_j(t)$, then the derivative
of $X_j(t)-X_i(t)$ is strictly positive. Hence
in particular if $p_j<p_i$ then $x_j(t)\leq x_i(t)$ for all $t$.

Suppose the maximum initial density is
attained by at least two types; say $p_1=p_2\geq p_j$ for all $j$.
Then by symmetry $x_1(t)=x_2(t)$ for all
$t$. It's impossible for a positive proportion of points
of two different types to remain unmatched
(in particular, the root would have an unmatched child
of a different colour with probability 1, and this would contradict
the final statement of Proposition \ref{prop:uniquestable}).
Hence in this case all points are matched.

Suppose on the other hand that
there is a unique type with highest initital probability.
Then we will show that a positive proportion of
points of this type remain unmatched:
\begin{prop}
If $p_1>p_j$ for all $j>1$, then
$\lim_{t\to\infty} x_1(t)>0$.
\end{prop}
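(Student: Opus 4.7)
My plan is to exhibit a conserved quantity for the system \eqref{symmetric-equation} which yields a strictly positive lower bound on $x_1(t)$ uniform in $t$. First I introduce the ratios $r_j(t):=x_j(t)/x_1(t)$ for $j>1$. A direct computation from \eqref{symmetric-equation} gives the decoupled ODE
\begin{equation*}
r_j'(t)=-x_1(t)\,r_j(t)\bigl(1-r_j(t)\bigr).
\end{equation*}
Since $r_j(0)=p_j/p_1<1$ and $x_1(t)>0$ for all finite $t$, this forces $r_j(t)\in(0,p_j/p_1]$ to be strictly decreasing, so in particular $0<1-r_j(t)\le 1$ throughout.

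Next I consider the quantity
\begin{equation*}
Q(t):=x_1(t)\prod_{j>1}\bigl(1-r_j(t)\bigr).
\end{equation*}
Taking a logarithmic derivative and using $(\log x_1)'=-\sum_{j>1}x_j$ together with $-r_j'/(1-r_j)=x_1 r_j=x_j$, a short calculation shows $Q'(t)\equiv 0$. Evaluating at $t=0$ gives the strictly positive constant
\begin{equation*}
Q=p_1\prod_{j>1}\left(1-\frac{p_j}{p_1}\right)=\frac{\prod_{j>1}(p_1-p_j)}{p_1^{\,k-2}}.
\end{equation*}
Combined with the trivial bound $\prod_{j>1}(1-r_j(t))\le 1$, conservation of $Q$ yields $x_1(t)\ge Q>0$ for every $t\ge 0$; as $x_1$ is decreasing its limit exists and satisfies $\lim_{t\to\infty}x_1(t)\ge Q>0$.

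The main obstacle is discovering the conservation law. In the symmetric case $p_2=\dots=p_k$ the symmetry $r_2=\dots=r_k=r$ suggests that $x_1(1-r)^{k-1}$ ought to be preserved, and the natural generalisation to distinct rates is the product $x_1\prod_{j>1}(1-r_j)$; differentiation then confirms the guess. As a sanity check, the value $Q$ recovers $\lambda=(p_1-p_2)^{k-1}p_1^{-(k-2)}$ stated for the symmetric case in Theorem~\ref{thm:symmetricRd}.
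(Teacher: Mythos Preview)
Your proof is correct and takes a genuinely different route from the paper. The paper argues qualitatively: it first shows that the differences $X_2(t)-X_1(t)=\log(x_1/x_2)$ tend to infinity via a differential inequality in the ``time'' variable $X_2$, deduces that eventually $x_1(t)>\sum_{j\ge 2}x_j(t)$, and then observes that $x_1-\sum_{j\ge 2}x_j$ is nondecreasing to conclude. Your approach is more direct and more quantitative: the conservation law $Q(t)=x_1(t)\prod_{j>1}(1-r_j(t))\equiv Q(0)$ immediately gives the uniform lower bound $x_1(t)\ge Q(0)>0$, with no need to first establish that the ratios tend to zero. In fact your argument yields more than the proposition asks: once $x_1\ge Q>0$ is in hand, the equation $r_j'=-x_1 r_j(1-r_j)$ forces $r_j(t)\to 0$, whence $x_1(\infty)=Q=\prod_{j>1}(p_1-p_j)/p_1^{\,k-2}$ exactly, extending the closed form in the Remark beyond the case $p_2=\dots=p_k$. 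The paper's route, by contrast, is perhaps more suggestive of the underlying ``mass balance'' intuition (each match uses at most one type-$1$ point), while yours is cleaner analytically and delivers the explicit limit.
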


\begin{proof}
Without loss of generality, assume that $p_1>p_2\geq p_3\geq \dots\geq p_k$.
As noted above, then also $x_2(t)\geq x_j(t)$ for all $t$ and all
$3\leq j\leq k$.
Since only the type with maximum initial probability can have points left
unmatched, we know that $x_2(t)\to 0$, i.e.\ $X_2(t)\to\infty$.
Now write $Z(t)=X_2(t)-X_1(t)$.
From (\ref{deriv}), we get
\begin{align*}
\frac{dZ(t)}{dX_2(t)}
&=\frac{d(X_2(t)-X_{1}(t))}{dX_{2}(t)}\\
&=\frac{x_1(t)-x_2(t)}
{\sum_{j>1}x_j(t)}\\
&=\frac{\exp(-X_1(t))-\exp(-X_{2}(t))}
{\exp(-X_1(t))+\sum_{j\geq 3}\exp(-X_j(t))}\\
&\geq
\frac{\exp(-X_1(t))-\exp(-X_2(t))}
{\exp(-X_1(t))+(k-2)\exp(-X_2(t))}\\
&=\frac{\exp(-X_{2}(t))(\exp(Z(t))-1)}
{\exp(-X_{2}(t))(\exp(Z(t))+(k-2))}\\
&=\frac{\exp(Z(t))-1}{\exp(Z(t))+(k-2)}\\
&=1-\frac{k-1}{\exp(Z(t))+(k-2)}.
\end{align*}

This derivative is always positive since
$Z(t)>0$ for all $t$; hence in fact $Z(t)$
is increasing as a function of $X_{2}(t)$,
and this derivative is bounded away from 0.
So $Z(t)\to\infty$ as $X_{2}(t)\to\infty$,
i.e.\ as $t\to\infty$.

This gives that $X_{2}(t)-X_1(t)\to\infty$,
i.e. that $x_{2}(t)/x_1(t)\to0$.
Then also $x_j(t)/x_1(t)\to0$ for all $j>1$.

So for some $t$,
\begin{equation}\label{moretype1}
x_1(t)>x_2(t)+\dots+x_k(t).
\end{equation}
Now the intuition is that since, looking at points unmatched
within weight $t$,
the density of type-1 points
is higher than the density of all other types put together,
it is impossible to match all the type-1 points.
To see this directly, one can use (\ref{deriv})
to observe that the derivative of $x_1(t)-\sum_{j\geq 2}x_j(t)$
is always non-negative (heurisitically,
this reflects the fact that each match involves at most one type-1 point and
at least one point of another type); combining with (\ref{moretype1})
gives that $x_1(t)$ stays bounded away from 0 as $t\to\infty$.
\end{proof}

\begin{remark}
In the case where $p_i, 1\leq i\leq k$ take only two
distinct values, we can solve exactly.
Consider for example the case where
$p_1>p_2=\dots=p_k$, which (up to reordering) is the only such case
where some points will remain unmatched.

Then by symmetry between all coordinates except the first,
\begin{align*}
x_1'(t)&=-(k-1)x_1(t)x_2(t)\\
x_2'(t)&=-(x_1(t) x_2(t) +(k-2)x_2(t)^2).
\end{align*}
We get
\[
\frac{dx_2}{dx_1}=\frac{1}{k-1}+\frac{k-2}{k-1}\frac{x_2}{x_1},
\]
which is solved by
\[
x_2(t)=x_1(t)-cx_1(t)^{\frac{k-2}{k-1}}.
\]
From the initial values $x_1(0)=p_1$, $x_2(0)=p_2$
we obtain $c=(p_1-p_2)p_1^{-\frac{k-2}{k-1}}$.
Then considering $t\to\infty$ and using $x_2(\infty)=0$,
we have
\begin{align*}
x_1(\infty)&=(p_1-p_2)^{k-1} p_1^{-(k-2)}\\
&=\left(1-\frac{p_2}{p_1}\right)^{k-1} x_1(0).
\end{align*}
We can interpret the quantity $x_1(\infty)/x_1(0)$
as the ``proportion of points of type 1 left unmatched"
(more precisely, the probability that the root is unmatched,
given that it has type 1).
Looking for asymptotics as the difference between the
initial probabilities becomes small,
we can put for example
$p_1=\frac1k+(k-1)\delta$
and $p_2=\frac1k-\delta$. Then  we obtain
\begin{align*}
\frac{x_1(\infty)}{x_1(0)}
&=\left(\frac{k^2\delta}{1+k(k-1)\delta}\right)^{k-1}
\\
&\sim \,\,\,k^{2(k-1)}\delta^{k-1}
 \text{ as } \delta\downarrow 0.
\end{align*}
\end{remark}

\section{Coupling the PWIT and a Poisson process in $\R^d$}
\label{sec:coupling}
\subsection{Palm version}
\label{subsec:Palm}
Consider a simple point process in $\R^d$ with finite intensity.
The \textbf{Palm version} of the process is obtained, informally speaking, by conditioning on the presence of a point at the origin. One can also describe the Palm version as giving the distribution of the process ``as seen from
a typical point". This notion can be formalised in various equivalent
ways. For example, let $\Pi$ be the point process, let
$[\Pi]$ denote the set of its points, and, for $y\in \R^d$,
let $\theta^y(\Pi)$ denote the process obtained by translating
$\Pi$ by $y$; then the probability of an event $A$
for the Palm version $\Pi^{\downarrow}$ of $\Pi$ can be defined
by
\[
\P(\Pi^{\downarrow}\in A)=
\frac{
\E\#\left\{x\in[\Pi]\cap[0,1]^d : \theta^{-x}(\Pi)\in A\right\}.
}
{
\E\#\left\{x\in[\Pi]\cap[0,1]^d\right\}
}.
\]
In the case of a Poisson process, the Palm version
has a particularly straightforward description; it can be obtained
simply by adding a point at the origin to a configuration
drawn from the original measure. See for example
Chapter 11 of Kallenberg \cite{Kallenberg} for extensive details.

Our multi-type models
add information about the colours of the points of the Poisson process.
In the language of \cite{Kallenberg}, this information
can be taken as a \textit{stationary background}. To obtain the Palm
version, we add a point at the origin whose colour is again
drawn according to the same distribution as the other points
(and independently of the rest of the configuration).

Intensities of various types of point in the original process
can then be related to probabilities involving the point at the origin in
the Palm version. In particular, the intensity of
points of type $i$ which are unmatched in the stable matching
is given by the probability that the point at the origin in the Palm
version has type $i$ and is unmatched in the stable matching.

\subsection{Descending paths}
In the PWIT, let a
\textit{descending path from the root with weights less than $T$}
be a sequence
$v_0, v_1, \dots, v_k$ of points of $\N^{\downarrow}$,
where $v_0$ is the root, where
$v_i$ is a child of $v_{i-1}$ for $i=1,2,\dots, k$, and where
\[
T>t(v_0, v_1)>t(v_1,v_2)>\dots>t(v_{k-1}, v_k),
\]
where $t(v_{i-1}, v_i)$ is the weight of the edge between
$v_{i-1}$ and $v_i$.

For the Palm version of the
Poisson process in $\R^d$,
let a \textit{descending path from the origin with distances less than $R$}
be a sequence of distinct points of the process $x_0, x_1, \dots, x_k$ where $x_0$ is the origin and
where
\[
R>|x_0-x_1|>|x_1-x_2|>\dots>|x_{k-1}-x_k|.
\]

For finite $T$, with probability 1 the set of descending paths from
the root within distance $T$ in the PWIT is finite and contains only
finite paths (see Lemma \ref{lemma:treesize}).
(The analogous property is also true for the
Palm version of the Poisson process in $\R^d$;
this is a special case of Theorem 4.1 of
Daley and Last \cite{DaleyLast}.)

From Proposition \ref{prop:whoismatched},
we know that for the stable matching on the PWIT,
the event that the point at the origin is matched
to a child along an edge with weight less than $T$
is in the sigma-algebra generated by
the graph of descending paths from the origin
within distance $T$ (including the information
about the colours of points); similarly in $\R^d$ for the event
that the origin is matched to a point at distance less than $R$.

\subsection{Description of the coupling}
Throughout this section, we consider the Palm version
of the Poisson process in $\R^d$.
Suppose $T$ and $R$ are related via $T=f_d(R):=\omega_d R^d$
so that a ball of radius $R$ in $\R^d$ has volume $T$.

We aim to couple the collection of descending paths from the root
with weights less than $T$ in the PWIT with
the collection of descending paths from the origin
with distances less than $R$ in $\R^d$,
in such a way that their graph structure is identical,
and such that the weight $t$ of an edge in the PWIT
and the distance $r$ between the corresponding points in $\R^d$
are related by $t=f_d(r)=\omega_d r^d$.
Specifically, we want to arrange that
the hypotheses of Proposition
\ref{prop:stablerescaled} are satsfied
with high probability.
As in Section \ref{sec:uniqueness},
let $V_R^\downarrow(O)$ and $E_R^\downarrow(O)$
be the sets of points, and respectively edges,
contained in some descending path from the origin
with distances less than $R$ in $\R^d$,
and let $\widetilde{V}_T^\downarrow(\emptyset)$
and $\widetilde{E}_T^\downarrow(\emptyset)$
be the set of points,
and respectively edges,
contained in some descending path
from the root in the PWIT with weights less than $T$.

\begin{prop}\label{prop:couplingsuccess}
There exist absolute constants $\alpha>0$ and $c$ such that the following holds.
Let $T\geq 1$, let $d>cT$, and let $R=f_d^{-1}(T)$.

Then we can couple the PWIT model
with the Palm version of the Poisson model
in $\R^d$ such that with probability at least
$1-e^{-\alpha T}$,
there exists a bijective map
$\phi:
V_R^\downarrow(O) \mapsto
\widetilde{V}_T^\downarrow(\emptyset)$
with the following properties:
\begin{itemize}
\item[(i)]
$v_i$ and $\phi(v_i)$ have the same colour for all $i$;
\item[(ii)]
$v_0, v_1, \dots, v_k$ is a descending
path from the origin with weights less than
$R$ in $\R^d$ if and only if $\phi(x_0), \phi(x_1), \dots ,\phi(x_k)$
is a descending path from the root with weights less than
$T$ in the PWIT, and if so then
$t(\phi(v_{i-1}), \phi(v_i))=
f_d(|x_i-x_{i=1}|)$
for $i=1,2,\dots,k$.
\end{itemize}
\end{prop}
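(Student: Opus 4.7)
The plan is to construct the coupling by a joint exploration of the descending-path trees in the two models, and then to bound the failure probability using high-dimensional concentration of measure.

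I would construct the coupling by a breadth-first exploration. Starting from the root ($\emptyset$ in the PWIT, $O$ in the Palm version), at each step pick a revealed vertex $v$ together with its incoming-edge weight $s_v$ (taking $s_\emptyset = T$) and reveal all descending children of $v$, that is, all neighbors at rescaled weight less than $s_v$. The key Poisson property that makes this coupling natural is that, in both models, conditional on the revealed history, the remaining neighbors of $v$ appear at rescaled weights $\omega_d|x-v|^d$ forming an independent rate-$1$ Poisson process on $\R_+$; this is true by definition in the PWIT, and by standard Palm theory in the Euclidean model. Using a common such Poisson process for both sides and assigning colors independently from $(p_1,\dots,p_k)$, I get condition~(i) automatically, and condition~(ii) holds provided the good event $G$ that every newly revealed Euclidean near-neighbor is a \emph{fresh} point of the Poisson process (not one we have already revealed via a different path) occurs. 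The bijection $\phi$ is then built directly from the exploration.

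To bound $\P(G^c)$, let $N$ denote the total number of revealed vertices; integrating over descending weight-sequences gives $\E N = \sum_{k\geq 0} T^k/k! = e^T$, and a sub-exponential tail bound on the same Galton--Watson-like structure gives $\P(N > e^{2T}) \leq \tfrac12 e^{-\alpha T}$ for a suitable $\alpha$. For a given pair $(v,z)$ of revealed vertices with $v \ne O$, let $v'$ be the parent of $v$ and $r := |v'-v|$, and condition on all revealed data except the direction of $v-v'$. By rotational invariance of the Palm Poisson process, that direction is a uniform $u \in S^{d-1}$, and the collision event $|v-z| < r$ rewrites as $\langle u,\, (z-v')/|z-v'|\rangle > |z-v'|/(2r)$; the classical spherical concentration bound $\P(\langle u, e_1\rangle > \tau) \leq e^{-d\tau^2/2}$ then yields probability at most $e^{-d|z-v'|^2/(8r^2)}$. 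Under a further high-probability event that pairwise distances between revealed vertices stay of order $r$ (which follows by iterating the same concentration estimate up the tree, using the descending constraint), the per-pair bound becomes $\leq e^{-cd}$ uniformly. Union-bounding over the $\leq N^2 \leq e^{4T}$ pairs yields $\P(G^c) \leq \tfrac12 e^{-\alpha T} + e^{4T-cd}$, which is $\leq e^{-\alpha T}$ as soon as $d > cT$ for $c$ large enough.

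The main obstacle is the per-pair collision estimate. The directional-uniformity argument delivers the clean $e^{-cd}$ exponent only when $|z-v'|$ is of order $r$; if $z$ sits anomalously close to $v'$, the bound degrades. One must therefore interleave the spherical-concentration bound with an inductive control on pairwise distances in the revealed tree, using the descending structure of the exploration to ensure that such anomalies themselves have exponentially small probability. Getting this interleaved conditioning and the compounding of bounds right is the technical heart of the proof.
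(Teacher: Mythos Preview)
Your overall architecture matches the paper's: an exploration coupling of the two descending-path structures, a bound $\E N = e^T$ on the tree size (hence $\P(N>e^{2T})\le e^{-T}$), and a high-dimensional geometric bound on collisions, combined by a union bound. However, two steps in your sketch are not correct as stated, and the paper's proof handles them differently.

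First, your coupling claim that in the Euclidean model the remaining neighbours of $v$ form a rate-$1$ Poisson process in rescaled weight is false once some region has been explored. When you process $x_j$, part of the ball $B_j=B(x_j,r_j)$ lies in $A_j=\bigcup_{i<j}B_i$, which you have already explored and conditioned to contain only the revealed points; the \emph{fresh} points are a rate-$1$ Poisson process on $B_j\setminus A_j$ only. The paper repairs this by introducing an independent supplementary Poisson process on $A_j$ to restore rate $1$ on all of $B_j$ (so that the PWIT side is correctly distributed), and adds the failure event ``supplementary process nonempty'' to the list. Your formulation of the good event $G$ (every near-neighbour is fresh) does not address this direction of the discrepancy.

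Second, the spherical-concentration route you propose has exactly the circularity you flag, and it is not clear how to close it. Your per-pair bound $\exp(-d|z-v'|^2/(8r^2))$ is useless when $|z-v'|\ll r$, e.g.\ when $z$ is a sibling of $v$ that happens to land near $v'$; controlling such events inductively is itself a collision-type estimate, so you are bootstrapping. The paper sidesteps this entirely with a much simpler device: it declares an additional failure mode whenever a child lands within $R/2$ of its parent (probability $\le 2^{-d}T$ per step) and whenever any two non-parent-child revealed points lie within $R$ of each other. This maintains the invariant that all revealed points are at pairwise distance $\ge R/2$, and then the uniform volume bound $\vol\bigl(B(y,R)\cap B(z,R)\bigr)\le (15/16)^{d/2}T$ for $|y-z|\ge R/2$ gives a flat $e^{-cd}$-type estimate for every remaining failure mode, with no inductive distance control needed. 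Summing the four failure probabilities over at most $e^{2T}$ steps yields $e^{-T}+4e^{4T}(15/16)^{d/2}T^2\le e^{-\alpha T}$ for $d>cT$.
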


Using this result we can apply
Proposition \ref{prop:stablerescaled} to obtain
that (with probability at least $1-e^{-\alpha T}$)
the origin in $\R^d$ is matched within distance $R$ if and only if the root of the PWIT is matched within distance $T$.

Our strategy is to couple a procedure that
explores the
collection of descending paths in $\R^d$ with one
that generates the collection for the PWIT,
aiming to maintain the bijection as described above.
If certain events occur for the Poisson configuration in $\R^d$,
the coupling will fail (and we terminate the procedure -- on this set we couple
the two processes in an arbitrary way so as to
maintain the required marginals);
but if the procedure reaches the end then it is guaranteed that
a bijection as described above exists. We will give a lower bound
for the probability that the coupling reaches the end successfully.

First we describe the procedure to explore the collection in $\R^d$.
We will abandon this
exploration if it ever discovers a point in
$\R^d$ that can be arrived at via two different descending paths
from the origin within distance $R$
(if this happens, it is certainly impossible to couple
successfully, since in the case of the PWIT the
collection of descending paths has a tree structure).

For a point $x\in V_R^\downarrow(O)$, other than the origin,
we say the \textit{parent} of $x$ is the point that precedes it
in the descending path from the origin to $x$ with
distance less than $R$. This is unambiguously defined for as long as the procedure keeps running, since if more than one such path is ever discovered, the procedure stops.

In fact, we will be more conservative. If we find two points of $V_R^\downarrow(O)$
which are closer than $R$ to each other, and neither is the parent of the other, we will abandon the procedure. (Note that this must occur if there is any point that can be arrived at via two different descending paths from the origin within distance $R$.) Furthermore, we will also abandon the procedure if it ever finds a parent and child which are closer than
$R/2$ to each other.

We explore space gradually, discovering points of $V_R^\downarrow(O)$
as we proceed. We maintain an ordered list
of points which we have discovered, say $x_0, x_1, \dots, x_k$,
where $x_0$ is the origin.
Let $r_0=R$ and for $j>0$, let $r_j$ be the distance to $x_j$ from its parent, which is $x_i$ for some $i<j$; note that $r_j<r_i$ by the descending path property.

We ``process" the points in order; to process $x_j$, we look for new points in the open ball $B_j:=B(x_j, r_j)$, and add any such new points to the end of the list. These are the points whose parent is $x_j$, i.e.\ the points which can follow $x_j$ in a descending path.

Suppose our list is $x_0, \dots, x_k$,
and we are currently processing point $x_j$ where $j\leq k$.
This means that
we have already processed $x_0, \dots, x_{j-1}$, and so
the region $A_j$ defined by
\begin{equation}\label{Ajdef}
A_j:=B_j\cap\bigcup_{0\leq i<j} B_i
\end{equation}
has already been explored,
and is known to contain no Poisson points
other than $x_j$ (if there had been any such
point, the procedure would have terminated
at an earlier stage since
that point and $x_j$
would have been too close to each other.)



Now we describe how to couple this exploration procedure
with a process which generates
the tree of descending paths with weights less than $T$
in the PWIT. First a useful observation:
\begin{lemma}\label{lemma:simplescaling}
Let $r>0$ and $t=f_d(r)$.
Let $x\in\R^d$ and let $x_1, x_2, \dots, x_k$
be the points of a Poisson process of rate 1 in $B(x,r)$.
Let $t_i=f_d(|x_i-x|)$. Then $t_1, t_2, \dots, t_k$
are the points of a Poisson process of rate 1 on $[0,t]$.
\end{lemma}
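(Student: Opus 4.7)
The plan is to apply the mapping theorem for Poisson processes: if $\Pi$ is a Poisson process with intensity measure $\mu$ and $\varphi$ is a measurable map whose pushforward $\varphi_*\mu$ is $\sigma$-finite and nonatomic, then $\varphi(\Pi)$ is a Poisson process with intensity $\varphi_*\mu$. I would take $\Pi=\{x_1,\dots,x_k\}$, the Poisson process on $B(x,r)$ with intensity equal to Lebesgue measure, and $\varphi(y)=f_d(|y-x|)=\omega_d|y-x|^d$, so that $\varphi(x_i)=t_i$.

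The only computation needed is to identify the pushforward $\varphi_*\bigl(\leb|_{B(x,r)}\bigr)$ on $[0,t]$. For each $s\in[0,t]$ one has
\[
\varphi^{-1}\bigl([0,s]\bigr)=\overline{B}\bigl(x,f_d^{-1}(s)\bigr),
\]
whose Lebesgue volume is $\omega_d\bigl(f_d^{-1}(s)\bigr)^d=s$. So the pushforward assigns mass $s$ to $[0,s]$ for every $s\in[0,t]$, i.e.\ it is Lebesgue measure on $[0,t]$.

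Invoking the mapping theorem now yields that $\{t_1,\dots,t_k\}$ is a rate-$1$ Poisson process on $[0,t]$. Nonatomicity of the pushforward also guarantees that almost surely no two $x_i$ lie at the same distance from $x$, so the $t_i$ are distinct and we genuinely have a simple Poisson process. There is no substantive obstacle here: the lemma is essentially the classical observation that $y\mapsto\omega_d|y-x|^d$ pushes Lebesgue measure on $B(x,r)$ forward to Lebesgue measure on $[0,t]$, lifted from a single uniform point to a Poisson configuration.
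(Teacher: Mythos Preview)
Your argument is correct and is essentially the same as the paper's: the paper's proof simply notes that this follows from basic properties of Poisson processes together with the fact that $\{y:f_d(|y-x|)<s\}=B(x,f_d^{-1}(s))$ has volume $s$, which is exactly your pushforward computation stated more tersely. You have just made explicit the invocation of the mapping theorem that the paper leaves implicit.
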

\begin{proof}
This is immediate from basic properties of the Poisson process,
and the fact that the ball
$\{y:f_d(|y-x|) <s\}=B(x,f_d^{-1}(s))$
has volume $s$.
\end{proof}

We start off with $x_0$ the origin in $\R^d$ and $v_0$ the
root of the PWIT. For as long as the coupling is successful,
at each stage we have a set of points $v_0, \dots, v_k$
which have been discovered in the PWIT, and which correspond
to the points $x_0, \dots, x_k$ discovered in $R^d$. The root
of the PWIT is $v_0$ and corresponds to the origin in $\R^d$,
which is $x_0$.
If $v_i$ is the parent of
$v_j$ in the PWIT, let $t_j$ be the weight of the edge
between $v_i$ and $v_j$;
then $x_i$ is the parent of $x_j$ in the sense described
earlier for $\R^d$, and $t_j=f_d(|x_i-x_j|)$.

At the same time as processing $x_j$ in $\R^d$, we process $v_j$ in the PWIT. Processing $v_j$ involves generating
the children of $v_j$ which are connected to $v_j$
along edges with weights in $[0,t_j)$.
Notice that $t_j$ is precisely the volume of $B(x_j, r_j)$.

Hence we can couple the children of $y_j$ in the interval $[0,t_j)$
with a Poisson process of rate 1 in $B_j=B(x_j, r_j)$ in
such a way that the weights on the edges from $y_j$ and
the distances of the points from $x_j$ are related according
to the scaling in Lemma \ref{lemma:simplescaling}.

Notice that at this stage of the exploration procedure,
the new points we discover are not a Poisson process on the
whole of $B_j$; as observed above at (\ref{Ajdef}),
the subset $A_j$ of $B_j$ has already been explored. Hence
to generate a set of children and edge weights according
to the correct distribution, we supplement the new
points in $B_j\setminus A_j$ (which are independent of everything
seen in the procedure so far, since this region
has not yet been explored so far) with an extra Poisson process
of rate 1 in $A_j$, again chosen independently of the
points in $B_j$ and of everything else seen so far.
In this way we obtain a Poisson process of rate 1 in $B_j$,
which is independent of the previous history of the procedure,
and we use the correspondence in Lemma \ref{lemma:simplescaling}
to derive the weights to children of $y_j$ which lie in $[0,t_j)$.

If in fact the extra Poisson process in $A_j$ contains at least one point,
we are in trouble, because we cannot maintain the correspondence between the new points found in $\R^d$ and the new vertices added
to the PWIT. In this case we abandon the procedure.
However, if this supplementary process in $A_j$ is empty,
then we can maintain the bijection and the procedure continues.

If the procedure finishes (i.e.\ runs out of new points in $\R^d$
to process) without abandoning, then it provides a bijection between
$V_R^\downarrow(O)$ and $\widetilde{V}_T^\downarrow(\emptyset)$ as required for Proposition \ref{prop:couplingsuccess}.

We summarise the ways that the procedure may fail at step $j$, i.e.\
at the step where we process the point $x_j$:
\begin{itemize}
\item[(1)] Within $B_j\setminus A_j$, we find a child of $x_j$
which is within distance $R/2$ of $x_j$.
\item[(2)] Within $B_j\setminus A_j$, we find two children of $x_j$
which are within distance $R$ of each other.
\item[(3)] Within $B_j\setminus A_j$, we find a child of $x_j$
which is within distance $R$ of a previously discovered point.
\item[(4)] The supplementary Poisson process of rate 1 on $A_j$
contains one or more points.
\end{itemize}
For $m=1,2,3,4$, let us write $\cE_j^{(m)}$ for the event
that the procedure successfully completes steps $1,\dots, j-1$,
and then failure type $(m)$ above occurs at step $j$.
(Under this definition it is possible that $\cE_j^{(m)}$ and $\cE_j^{(m')}$
both occur for different $m$ and $m'$, but it is not possible that
$\cE_j^{(m)}$ and $\cE_{j'}^{(m')}$ both occur for
$m,m'\in\{1,2,3,4\}$ and for different $j$ and $j'$.)
In the next section we bound the probabilities of each of these types of failure.

If the procedure does fail at step $j$, we do not proceed to step $j+1$.
For the sake of being specific about the coupling, let us say that we
we generate the rest of the subtree of the PWIT spanned by $\widetilde{V}_T^\downarrow(\emptyset)$
according to its distribution conditional on the part of the structure
already created at steps $1,\dots, j$, and independently of any further
information about the process in $\R^d$.

\subsection{Bounding the probability of failure of the coupling}
As above, throughout this section we set $T=\omega_d R^d$, the volume of a ball
of radius $R$ in $\R^d$.

\label{boundsection}
\begin{lemma}\label{lemma:E1bound}
For all $j$, $\P(\cE_j^{(1)})\leq \left(\frac12\right)^d T$.
\end{lemma}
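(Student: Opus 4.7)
The plan is to bound $\P(\cE_j^{(1)})$ by the expected number of points in a ball of radius $R/2$ inside fresh, previously unexplored space, and to observe that this expected number is exactly $(1/2)^d T$ by the volume formula.

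First I would recall the setup: at step $j$, the procedure is processing $x_j$ by inspecting $B_j = B(x_j, r_j)$, where $r_j \leq R$. The region $A_j \subseteq B_j$ defined in (\ref{Ajdef}) has already been explored at earlier steps, so any new children of $x_j$ that can possibly cause the failure type (1) must lie in $B_j \setminus A_j$, and within distance $R/2$ of $x_j$. Thus the relevant region is
\[
D_j := \bigl(B(x_j, R/2) \cap B_j \bigr) \setminus A_j,
\]
and $\cE_j^{(1)}$ is contained in the event that at least one Poisson point of the underlying process in $\R^d$ lies in $D_j$.

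Next I would use the standard independence property of the Poisson process: conditional on the successful completion of steps $1,\dots,j-1$ and on all the information gathered so far (in particular the locations $x_0,\dots,x_j$ and the explored region $\bigcup_{i<j}B_i$), the restriction of the original Poisson process to the unexplored region $\R^d \setminus \bigcup_{i<j} B_i$ is still a Poisson process of rate $1$, independent of that history. Since $D_j$ is contained in this unexplored region, the number of Poisson points in $D_j$ conditional on the history is Poisson with mean equal to $\vol(D_j)$.

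Finally, since $D_j \subseteq B(x_j, R/2)$, we have $\vol(D_j) \leq \omega_d (R/2)^d = (1/2)^d \omega_d R^d = (1/2)^d T$. By Markov's inequality applied to the conditional count,
\[
\P\bigl(\cE_j^{(1)} \,\big|\, \text{history}\bigr) \;\leq\; \E\bigl[\#\{\text{points in } D_j\} \,\big|\, \text{history}\bigr] \;\leq\; (1/2)^d T,
\]
and taking expectations gives the desired bound. There is no real obstacle here; the only point requiring a little care is the conditional-independence statement, which follows from the fact that our exploration procedure is adapted to the restriction of the Poisson process to the already-explored region, so the process on the complement remains a fresh Poisson process of rate $1$.
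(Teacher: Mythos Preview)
Your proof is correct and follows essentially the same approach as the paper's: bound the failure event by the probability that the fresh Poisson process has a point in $B(x_j,R/2)$, and use that the volume of this ball is $(1/2)^d T$. Your version is in fact slightly more careful than the paper's, making explicit the intersection with $B_j$, the conditional-independence argument, and the use of Markov's inequality.
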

\begin{proof}
$\cE_j^{(1)}$ is the probability that the procedure
reaches step $j$, and then we find at least one new point
in $B(x_j, R/2)\setminus A_j$. Since (independently of everything
seen so far) the points in that set form a Poisson process of rate 1,
this probability is bounded above by the volume of $B(x_j, R/2)$,
which is $(1/2)^d T$ as required.
\end{proof}

\begin{lemma}
If $y,z\in \R^d$ with $|y-z|\geq R/2$,
then
\begin{equation}\label{volbound}
\vol\big(B(y,R)\cap B(z,R)\big)
\leq \left(\frac{15}{16}\right)^{d/2} T.
\end{equation}
\end{lemma}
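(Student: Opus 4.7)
The plan is to use the parallelogram identity to show that the intersection of two balls of radius $R$ whose centers are at least $R/2$ apart is contained in a single ball of radius $R\sqrt{15/16}$, and then read off the volume bound.

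First I would write $m=(y+z)/2$ and, for any $x\in\R^d$, expand
\[
|x-y|^2+|x-z|^2 = 2|x-m|^2 + \tfrac{1}{2}|y-z|^2,
\]
which is the parallelogram law applied to the vectors $x-y$ and $x-z$. If $x \in B(y,R)\cap B(z,R)$, the left-hand side is at most $2R^2$, and so
\[
|x-m|^2 \leq R^2 - \tfrac{1}{4}|y-z|^2.
\]
Hence $B(y,R)\cap B(z,R)\subseteq B\!\bigl(m,\sqrt{R^2 - |y-z|^2/4}\bigr)$.

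Next I would use the hypothesis $|y-z|\geq R/2$ to bound the radius: since $|y-z|^2/4 \geq R^2/16$, we have $R^2 - |y-z|^2/4 \leq \tfrac{15}{16}R^2$, so the intersection is contained in a ball of radius at most $R\sqrt{15/16}$. Taking volumes then gives
\[
\vol\bigl(B(y,R)\cap B(z,R)\bigr) \leq \omega_d\bigl(R\sqrt{15/16}\bigr)^d = \Bigl(\tfrac{15}{16}\Bigr)^{d/2}\omega_d R^d = \Bigl(\tfrac{15}{16}\Bigr)^{d/2} T,
\]
which is exactly \eqref{volbound}. There is no real obstacle here — the only slightly non-routine step is recognising that the parallelogram identity immediately confines the intersection to a single ball centered at the midpoint; once that is done, the stated bound follows from plugging in $|y-z|\geq R/2$.
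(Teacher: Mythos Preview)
Your proof is correct and follows essentially the same approach as the paper: both show that the intersection is contained in the ball of radius $h=\sqrt{R^2-|y-z|^2/4}$ centered at the midpoint, then use $|y-z|\geq R/2$ to get $h\leq R\sqrt{15/16}$. The only difference is cosmetic---you invoke the parallelogram identity explicitly, whereas the paper appeals to the planar geometry of the triangle $x,y,z$---but the underlying computation is identical.
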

\begin{proof}
Any point $x$ in the intersection of the two balls
is at distance at most $h$ from the midpoint of $y$ and $z$,
where $h^2=R^2-(|y-z|/2)^2\leq (15/16)R^2$.
(This can be easily checked by considering the plane which contains $x$, $y$ and $z$.)
Hence the intersection is contained in a ball of radius $h$,
whose volume is $(h/R)^d T=(15/16)^{d/2} T$.
\end{proof}

\begin{lemma}\label{lemma:E2bound}
For all $j$,
$\P(\cE_j^{(2)}\setminus \cE_j^{(1)})\leq \left(\frac{15}{16}\right)^{d/2}T^2$.
\end{lemma}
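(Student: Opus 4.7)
The plan is to apply a first-moment bound to the Poisson process of children of $x_j$ discovered in $B_j\setminus A_j$, combined with the volume inequality from the preceding lemma.

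First I would note that on the event $\cE_j^{(2)}\setminus\cE_j^{(1)}$, the procedure successfully completes steps $1,\dots,j-1$, so the quantities $x_j, r_j, B_j, A_j$ are measurable with respect to the history up through step $j-1$. Moreover, because $\cE_j^{(1)}$ does \emph{not} occur, every new child of $x_j$ found in $B_j\setminus A_j$ lies outside $B(x_j, R/2)$. Hence $\cE_j^{(2)}\setminus\cE_j^{(1)}$ is contained in the event that the procedure reaches step $j$ \emph{and} the set of newly-found points in $B_j\setminus A_j$ contains an (ordered) pair $(y,z)$ of distinct points satisfying $|y-x_j|\geq R/2$, $|z-x_j|\geq R/2$, and $|y-z|<R$.

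Next I would condition on the history through the end of step $j-1$. By the construction of the exploration procedure, the region $B_j\setminus A_j$ has not yet been probed, so conditional on the history the Poisson points in $B_j\setminus A_j$ form a rate-$1$ Poisson process independent of that history. By Markov's inequality, the conditional probability of finding an ordered pair $(y,z)$ as above is bounded by the expected number of such ordered pairs, which by the Campbell--Mecke formula equals
\begin{equation*}
\int_{y\in B_j\setminus A_j,\,|y-x_j|\geq R/2}\ \int_{z\in B_j\setminus A_j,\,|z-x_j|\geq R/2,\,|z-y|<R} dz\,dy.
\end{equation*}
Dropping the restriction to $B_j\setminus A_j$ in both integrals only enlarges them. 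For fixed $y$ with $|y-x_j|\geq R/2$ the inner integral is at most $\vol\bigl(B(x_j,R)\cap B(y,R)\bigr)$, which by the preceding lemma is at most $(15/16)^{d/2}T$. The outer integral is then at most $\vol(B(x_j,R))=T$. Multiplying gives the uniform bound $(15/16)^{d/2}T^2$; taking expectation over the history yields the claim.

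The argument is essentially bookkeeping, so I do not expect a serious obstacle. The one point requiring care is the independence claim: one must check that conditioning on the exploration history through step $j-1$ leaves the point configuration in the unexplored region $B_j\setminus A_j$ distributed as a rate-$1$ Poisson process, which follows from the strong spatial Markov property of the Poisson process together with the fact that $B_j$ and $A_j$ are determined by points already revealed.
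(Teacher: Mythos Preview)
Your proposal is correct and follows essentially the same approach as the paper: bound the probability by the expected number of pairs of new points in $B_j$ that are each at distance at least $R/2$ from $x_j$ and within $R$ of one another, then control the inner integral via the preceding volume bound and the outer integral by $\vol B(x_j,R)=T$. The only cosmetic differences are that you are more explicit about conditioning on the history and invoking the spatial independence of the Poisson process, and you phrase the count in terms of ordered pairs; the paper's proof is terser but otherwise identical.
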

\begin{proof}
If $\cE_j^{(2)}$ happens but $\cE_j^{(1)}$ does not, then
at step $j$ we find a pair of new points, say $y$ and $y'$,
which are both between distance $R/2$ and $R$ from $x_j$, and are within distance
$R$ of each other.

The expected number of such pairs is no more than
\[
\int_y I\big(y\in B(x_j, R)\setminus B(x_j, R/2)\big)
\int_{y'} I\big(y'\in B(x_j, R)\cap B(y,R) \big)dy' dy.
\]
Using (\ref{volbound}), this is bounded above by
\[
\big[\vol B(x_j, R)-\vol B(x_j, R/2)
\big]
\left(\frac{15}{16}\right)^{d/2} T
\]
which is less than $(15/16)^{d/2}T^2$ as desired.
\end{proof}

\begin{lemma}\label{lemma:E3bound}
For all $j$ and $K$,
$\P(\cE_j^{(3)} \cap \{|\widetilde{V}_T^\downarrow(\emptyset)|\leq K\})
\leq (K-1)\left(\frac{15}{16}\right)^{d/2}T$.
\end{lemma}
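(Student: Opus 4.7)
The plan is a conditional first-moment (Markov) argument. The key structural observation is that $\cE_j^{(3)}$ by definition already includes success of the procedure through step $j-1$, so the coupling bijection is in force and the points $x_0,\ldots,x_m$ on the list in $\R^d$ correspond to distinct vertices $v_0,\ldots,v_m\in\widetilde V_T^\downarrow(\emptyset)$. Consequently, on the additional event $\{|\widetilde V_T^\downarrow(\emptyset)|\leq K\}$ we must have $m+1\leq K$, so the list contains at most $K-1$ points other than $x_j$.

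Second, success of the earlier steps forces a uniform pairwise separation on the discovered points: because failure type (1) has not occurred, every parent--child pair is at distance at least $R/2$; because failure types (2) and (3) have not occurred at any earlier step, every non parent--child pair is at distance at least $R$. In particular $|x_i-x_j|\geq R/2$ for every $i\leq m$ with $i\neq j$, so the lemma immediately preceding the statement yields
\[
\vol\bigl(B(x_j,R)\cap B(x_i,R)\bigr)\leq (15/16)^{d/2}T.
\]

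Finally, let $N$ be the number of Poisson points in $B_j\setminus A_j$ that lie within distance $R$ of some $x_i$ with $i\leq m$, $i\neq j$, so that $\ind\{\cE_j^{(3)}\}\leq N$. Conditional on the history $\mathcal H_j$ at the start of step $j$, the points in $B_j\setminus A_j$ form an independent rate-$1$ Poisson process (precisely because this region has not yet been explored in the coupling construction), so
\[
\E[N\mid\mathcal H_j]\;\leq\;\sum_{i\leq m,\,i\neq j}\vol\bigl(B(x_j,R)\cap B(x_i,R)\bigr)\;\leq\;m\,(15/16)^{d/2}T.
\]
Since $\cE_j^{(3)}\cap\{|\widetilde V_T^\downarrow(\emptyset)|\leq K\}$ is contained in $\{\text{success through step }j-1\}\cap\{m\leq K-1\}\cap\{N\geq 1\}$, multiplying by the indicator of the first two events and taking expectations gives the required bound $(K-1)(15/16)^{d/2}T$. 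I do not foresee a serious obstacle; the only thing to be careful about is the bookkeeping that passes from ``the PWIT tree has at most $K$ vertices'' to ``the list has at most $K$ entries'' via the bijection on the success event, and the verification that the $R/2$ separation really does apply to every pair in the list rather than only to parent--child pairs.
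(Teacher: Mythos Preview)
Your proposal is correct and follows essentially the same approach as the paper: bound the probability that a fresh Poisson point in the unexplored region $B_j\setminus A_j$ falls within distance $R$ of one of the at most $K-1$ previously discovered points other than $x_j$, using the intersection volume bound $(15/16)^{d/2}T$ for each such point (which applies because success through step $j-1$ forces $|x_j-x_i|\geq R/2$). Your write-up is in fact more careful than the paper's on two points the paper leaves implicit: the passage from $|\widetilde V_T^\downarrow(\emptyset)|\leq K$ to $m\leq K-1$ via the coupling bijection, and the verification that the $R/2$ separation holds for \emph{every} pair $(x_j,x_i)$ on the list, not just parent--child pairs.
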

\begin{proof}
If the procedure runs successfully to step $j$,
and $|\widetilde{V}_T^\downarrow(\emptyset)|\leq K$, then at step $j$ (when we come to process the point $x_j$),
the set of already discovered points is $x_1,\dots, x_k$ for some
$k$ with $j\leq k\leq K$.

We want to bound the probability that we then find a new point inside $B_j$
which is within distance $R$ of some $x_i$, $i\ne j$, $i\leq k$.

This is at most
\[
\vol
\bigcup_{i\leq k, i\ne j} \big(B(x_i, R)\cap B(x_j, R)\big).
\]
But if indeed the procedure has been successful so far, then
in particular $|x_j-x_i|\geq R/2$ for all such $i$.
Then using (\ref{volbound}), the probability is at most $(k-1)(15/16)^{d/2}T$
which gives the desired bound.
\end{proof}

\begin{lemma}\label{lemma:E4bound}
For all $j$,
$\P(\cE_j^{(4)})
\leq (j-1)\left(\frac{15}{16}\right)^{d/2}T$
\end{lemma}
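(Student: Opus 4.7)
The plan is to bound $\P(\cE_j^{(4)})$ by combining the independence of the auxiliary Poisson process from the rest of the construction with a deterministic bound on $\vol(A_j)$. By construction, the rate-$1$ Poisson process supplementing $A_j$ is independent of all the randomness used during steps $1,\dots,j-1$; consequently, conditionally on the history up to step $j$, its probability of being non-empty equals $1-e^{-\vol(A_j)} \le \vol(A_j)$. Since $\cE_j^{(4)}$ requires the procedure to survive through step $j-1$, this yields
\[
\P(\cE_j^{(4)}) \le \E\bigl[\ind_{\{\text{reach step } j\}}\,\vol(A_j)\bigr].
\]

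Next I would bound $\vol(A_j)$ deterministically on the event of reaching step $j$. From $A_j = B_j \cap \bigcup_{i<j} B_i$ and subadditivity of volume, $\vol(A_j) \le \sum_{i<j} \vol(B_j \cap B_i)$. The key geometric claim is that whenever the procedure has survived and $x_j$ has been added, one has $|x_i - x_j| \ge R/2$ for every $i < j$: if $x_i$ is the parent of $x_j$, this is exactly the non-failure of type~(1) at the step when $x_j$ was added; for any other previously discovered $x_i$, the non-failure of type~(3) forces the stronger $|x_i-x_j| \ge R$. In either case, combined with $r_i, r_j \le R$, the pair $(x_i,x_j)$ satisfies the hypothesis of the earlier inequality~(\ref{volbound}), giving $\vol(B_j \cap B_i) \le (15/16)^{d/2}\,T$ for each of the $j-1$ indices $i<j$.

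Summing yields $\vol(A_j) \le (j-1)(15/16)^{d/2}\,T$ on the good event, which completes the proof. The only subtle step is to recognise that the uniform $R/2$ pairwise lower bound covers the parent of $x_j$ as well as the non-relatives (so that no separate, more delicate, treatment of the intersection $B_p \cap B_j$ is needed); once that is verified, the volume estimate and the final summation are routine.
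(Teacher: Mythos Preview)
Your proof is correct and follows essentially the same route as the paper: bound $\vol(A_j)$ by $\sum_{i<j}\vol(B_i\cap B_j)$, use that the procedure's survival forces $|x_i-x_j|\ge R/2$ for all $i<j$, and apply (\ref{volbound}) to each term. One tiny imprecision: siblings of $x_j$ (other children of the same parent, with indices strictly between the parent's index and $j$) are excluded by non-failure of type~(2) rather than type~(3), but the conclusion $|x_i-x_j|\ge R$ is the same, so nothing changes.
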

\begin{proof}
This case is very similar to Lemma \ref{lemma:E3bound}.
We wish to bound the probability that at step $j$, the
``supplementary" Poisson process of rate 1 in the set $A_j$ defined
by (\ref{Ajdef}) is non-empty. Using the same argument as above,
if the procedure has run successfully up to step $j$, then
each point $x_i, i<j$ is at distance at least $R/2$ from $x_j$.
Then the volume of the set in (\ref{Ajdef}) is at most
$(j-1)(15/16)^{d/2}T$.
\end{proof}

\begin{lemma}\label{lemma:treesize}
$\E\big(|\widetilde{V}_T^\downarrow(\emptyset)|\big)=e^T$, and hence
\begin{align}
\label{Markovbound}
\P\big(|\widetilde{V}_T^\downarrow(\emptyset)|>e^{2T}\big)&\leq e^{-T}.
\end{align}
\end{lemma}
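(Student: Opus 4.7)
The plan is to compute $\E|\widetilde{V}_T^\downarrow(\emptyset)|$ directly by exploiting the self-similar structure of the PWIT, and then deduce the tail bound from Markov's inequality. Let $f(T) := \E|\widetilde{V}_T^\downarrow(\emptyset)|$. The root itself always lies in this set, contributing $1$. Any other vertex in $\widetilde{V}_T^\downarrow(\emptyset)$ lies in the subtree rooted at some child of $\emptyset$, reached via an edge of weight $t < T$, and the remainder of the descending path from that child must have all edge-weights less than $t$. Crucially, the subtree rooted at any child is itself a PWIT, independent of the weights at the root.

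Therefore, conditioning on the Poisson process of children of the root (rate $1$ on $\R_+$) and applying Campbell's formula,
\begin{equation*}
f(T) = 1 + \E\sum_{j : t^{(\emptyset)}_j < T} f\bigl(t^{(\emptyset)}_j\bigr) = 1 + \int_0^T f(t)\,dt.
\end{equation*}
I would first argue that $f(T) < \infty$ for all $T$ (for instance, by truncating the recursion at depth $k$ and showing that the depth-$k$ contribution is $T^k/k!$, as below). Given finiteness, the integral equation implies $f'(T) = f(T)$ with $f(0) = 1$, so $f(T) = e^T$.

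As a cross-check and to avoid the finiteness issue, I would also compute the depth-$k$ contribution directly. The expected number of vertices at depth $k$ in $\widetilde{V}_T^\downarrow(\emptyset)$ equals
\begin{equation*}
\E\#\bigl\{(j_1,\dots,j_k) : T > t^{(\emptyset)}_{j_1} > t^{(j_1)}_{j_2} > \dots > t^{(j_1\cdots j_{k-1})}_{j_k}\bigr\}.
\end{equation*}
Conditioning on the weights at each level and using that the children of each vertex form an independent rate-$1$ Poisson process, the expected count at depth $k$ equals $\int_0^T \int_0^{t_1} \cdots \int_0^{t_{k-1}} dt_k \cdots dt_1 = T^k/k!$. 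Summing over $k \geq 0$ gives $f(T) = \sum_k T^k/k! = e^T$, confirming both the value and the finiteness.

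Finally, the bound $\P(|\widetilde{V}_T^\downarrow(\emptyset)| > e^{2T}) \leq e^{-T}$ is immediate from Markov's inequality applied to the nonnegative integer-valued random variable $|\widetilde{V}_T^\downarrow(\emptyset)|$. The main (minor) obstacle is just justifying the interchange of expectation and summation underlying the recursion; this is handled by the depth-by-depth computation, since all quantities are nonnegative and the finite depth-$k$ expectations sum to $e^T$, so Fubini/Tonelli applies without issue.
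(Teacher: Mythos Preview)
Your proof is correct, and your depth-by-depth computation is exactly the paper's argument: the paper computes the expected number of descending paths of length $k$ as $\int_{0<t_k<\dots<t_1<T} dt_1\cdots dt_k = T^k/k!$, sums to $e^T$, and applies Markov. Your additional recursive approach via $f(T)=1+\int_0^T f(t)\,dt$ is not in the paper but is a nice alternative that exploits the PWIT's self-similarity more directly; it requires the finiteness check you note, which the depth-by-depth argument supplies anyway.
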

\begin{proof}
For $k\geq 0$,
the expected number of descending paths $v_0, v_1,\dots v_k$
with weights less than $T$ in the PWIT, where $v_0$ is the origin, is given by
\[
\int_{0<t_k<\dots<t_1<T}dt_1\dots dt_k
\]
which is $T^k/k!$.
Since in the PWIT each point is the endpoint of at most one
such path, we can sum over $k$ to get
$\E\big(|\widetilde{V}_T^\downarrow(\emptyset)|\big)=e^T$.
The bound in
(\ref{Markovbound})
then follows by Markov's inequality.
\end{proof}

\begin{proof}[Proof of Proposition \ref{prop:couplingsuccess}]
Using the estimate in (\ref{Markovbound}), we can combine
the four previous bounds using a union bound.

If the procedure fails, then either it does so at step $j$ for some $j\leq e^{2T}$,
or $|\widetilde{V}_T^\downarrow(\emptyset)|>e^{2T}$. Then we can combine all the bounds
in Lemmas \ref{lemma:E1bound}, \ref{lemma:E2bound}, \ref{lemma:E3bound},
\ref{lemma:E4bound} and \ref{lemma:treesize}
to give
\begin{align*}
\P\left(
\bigcup_{j=1}^\infty \bigcup_{m=1}^4 \cE_j^{(m)}\right)
&\leq
\P\left(|\widetilde{V}_T^\downarrow(\emptyset)|>e^{2T}\right)
+ \P\left(\bigcup_{1\leq j\leq e^{2T}} \bigcup_{m=1}^4 \cE_j^{(m)}, |\widetilde{V}_T^\downarrow(\emptyset)|\leq e^{2T}\right)\\
&\leq
e^{-T}+\sum_{1\leq j\leq e^{2T}}\sum_{m=1}^4 \P\left(\cE_j^{(m)}, |\widetilde{V}_T^\downarrow(\emptyset)|\leq e^{2T}\right)
\\
&\leq e^{-T}+ 4\big(e^{2T}\big)^2\left(\frac{15}{16}\right)^{d/2}T^2
\end{align*}
(assuming $T\geq 1$).
For some constants $c$ and $\alpha$, this upper bound
is less than $e^{-\alpha T}$ for all $T\geq 1$ and
all $d>cT$,
as required for Proposition \ref{prop:couplingsuccess}.
\end{proof}

\section{Euclidean model: proof of Theorems
\ref{thm:asymmetricRd} and \ref{thm:symmetricRd}}
\label{sec:conclusionRd}

\begin{prop}\label{prop:larged}
Consider stable matching for the asymmetric two-type
model in $\R^d$ where each point is red with probability $1-\epsilon$
and blue with probability $\epsilon$.
Fix any $\delta>0$. Then
there exists $c'=c'(\delta)$ such that for
all small enough $\epsilon$,
and all $d>c'\frac1\epsilon e^{1/\epsilon}$,
the density of blue points which
remain unmatched is in
$[(1-\delta)\epsilon e^{-1/\epsilon+1},
(1+\delta)\epsilon e^{-1/\epsilon}+1]$.
\end{prop}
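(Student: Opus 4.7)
The plan is to combine three ingredients: the ODE analysis of the PWIT two-type model in Section~\ref{subsubsec:asymmPWIT}, which identifies $b(\infty)=\epsilon e^{-(1-\epsilon)/\epsilon}$ as the probability that the PWIT root is blue and unmatched; the coupling of Proposition~\ref{prop:couplingsuccess}; and Palm theory, by which the density of unmatched blue points in $\R^d$ equals $\P(\text{origin blue and unmatched})$. The task is thus to sandwich $\P(\text{origin blue and unmatched})$ between $(1\pm\delta)b(\infty)$.

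I would fix a scale $T=T(\delta,\epsilon)$ and apply the coupling at $(R,T)$ with $R=f_d^{-1}(T)$ and $d>cT$. By Proposition~\ref{prop:stablerescaled}, on the event that the coupling succeeds (probability at least $1-e^{-\alpha T}$), the origin in $\R^d$ is matched within distance $R$ if and only if the PWIT root is matched within weight $T$, so
\[
\bigl|\P(\text{origin blue, not matched within }R)-b(T)\bigr|\leq e^{-\alpha T}.
\]
Using the ODE identity $b(T)/b(\infty)=\exp\bigl(\int_T^\infty r(u)\,du\bigr)$ together with sharp asymptotics for $r$---one has $r(u)\sim(b(\infty)/\epsilon)\,e^{-b(\infty)u}$ as $u\to\infty$, following from the exact integral $\int_0^\infty r=(1-\epsilon)/\epsilon$---the choice $T$ of order $(1/b(\infty))\log(1/(\delta b(\infty)))$, which is of order $e^{1/\epsilon}/\epsilon$ for small $\epsilon$, makes both $b(T)-b(\infty)$ and $e^{-\alpha T}$ at most $(\delta/3)b(\infty)$. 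Since $\{\text{origin blue, unmatched}\}\subseteq\{\text{origin blue, not matched within }R\}$, this already gives the upper bound $\P(\text{origin blue and unmatched})\leq(1+\delta)b(\infty)$.

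The main obstacle is the lower bound, because the coupling only controls events at scale $\leq R$, while a point not matched within $R$ could in principle be matched at much greater distance. I would close this gap by a tail bound for the asymmetric stable matching of the form $\P(\text{origin matched at distance}>R)\leq C/(\omega_d R^d)=C/T$, proved in the spirit of Theorem~5 of \cite{HPPS}: stability forces every red point in the ball $B(O,L)$ (where $L$ is the distance from origin to its partner) to itself be matched at distance strictly less than its own distance to $O$, and a ball-packing volume estimate then bounds the density of long matches. Requiring additionally that $C/T\leq(\delta/3)b(\infty)$, i.e.\ $T\geq 3C/(\delta b(\infty))$, is what produces the threshold $d>c'(\delta)e^{1/\epsilon}/\epsilon$ in the statement via $d>cT$. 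With this in hand,
\[
\P(\text{origin blue and unmatched})\geq b(T)-e^{-\alpha T}-C/T\geq b(\infty)-\frac{2\delta}{3}b(\infty)\geq(1-\delta)b(\infty),
\]
closing the sandwich. The most delicate step is establishing the tail bound with a constant $C$ that is essentially dimension-free; the stability argument adapts from \cite{HPPS} without the colours causing any disruption, but one must verify that the packing constants do not blow up in $d$ faster than the target threshold allows.
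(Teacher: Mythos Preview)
Your upper bound and the coupling step match the paper's argument. The divergence is in the lower bound, and here the paper's route is both simpler and avoids exactly the difficulty you flag.

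You propose to control $\P(\text{origin blue, matched at distance}>R)$ directly by an HPPS-style stability-plus-packing argument, and you correctly note that the delicate point is whether the resulting constant $C$ is dimension-free. The paper sidesteps this entirely by a one-line mass transport observation: in the asymmetric model every blue point matched at distance greater than $R$ is matched to a \emph{red} point matched at distance greater than $R$, so the density of the former is at most the density of the latter. In your notation this gives
\[
b^{(d)}(T)-b^{(d)}(\infty)\;\leq\; r^{(d)}(T)-r^{(d)}(\infty)\;\leq\; r^{(d)}(T),
\]
i.e.\ $b^{(d)}(\infty)\geq b^{(d)}(T)-r^{(d)}(T)$. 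Now both $b^{(d)}(T)$ and $r^{(d)}(T)$ are controlled by the coupling (not just $b^{(d)}(T)$), so
\[
b^{(d)}(\infty)\;\geq\; b(T)-r(T)-e^{-\alpha T}\;\geq\; b(\infty)-\tfrac{1}{T}-e^{-\alpha T},
\]
using only the crude bound $b(T)-b(\infty)\leq r(T)\leq 1/(T+1)$ (from $(b-r)'\geq 0$ and comparison with the one-type ODE). No packing, no dimension-dependent constants, and no need for the sharper exponential asymptotics of $r$ that you invoke. Choosing $T=\gamma/b(\infty)$ for large $\gamma$ then finishes as in your outline.

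So your plan is not wrong, but it trades an immediate mass-transport inequality for a tail estimate whose dimension uniformity you leave unverified; the paper's argument shows that this detour is unnecessary.
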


\begin{proof}
Recall from Section \ref{subsubsec:asymmPWIT}
that $r(t)$ and $b(t)$ are the probabilities
that the root of the PWIT is red (or respectively blue)
and is not matched within distance $t$.
We have $r(0)=1-\epsilon$ and $b(0)=\epsilon$,
and as $t\to \infty$, $r(0)\to 0$ and $b(0)\to b(\infty)=
\epsilon e^{-1/\epsilon+1}$.

Correspondingly, write $r^{(d)}(t)$
and $b^{(d)}(t)$ for the probability that,
in the Palm version of the model in $\R^d$,
the point at the origin is red (or respectively blue)
and is not matched within distance $f_d(t)$.
By ergodicity of the Poisson process, the sets of points in $\R^d$
which are red, or respectively blue, and unmatched
within distance $f_d(t)$ have densities $r^{(d)}(t)$
and $b^{(d)}(t)$ with probability 1.
Set also $b^{(d)}(\infty)=\lim b^{(d)}(t)$;
then the set of blue points which remain unmatched
for ever has density $b^{(d)}(\infty)$ with probability 1.

The density of blue points matched at distance
greater than $t$ cannot be greater than the
density of red points matched at distance greater than $t$
(by a mass transport argument). Hence we have that for any $t$,
\begin{equation}\label{bdinfbounds}
b^{(d)}(t)-r^{(d)}(t)
\leq
b^{(d)}(\infty)
\leq b^{(d)}(t).
\end{equation}

From (\ref{rb-equation}),
$b(t)-r(t)$ has positive derivative at all times, so that
$b(t)-b(\infty)\leq r(t)-r(\infty)=r(t)$ for all $t$.
Combining with the bound on $r(t)$ just after (\ref{rb-equation}),
we have that for all $t$,
\begin{equation}
b(t)-b(\infty)\leq r(t) \leq \frac{1}{t}.
\label{btbound}
\end{equation}

Now fix some $\gamma>1$, and let
$T=\gamma\frac1\epsilon e^{1/\epsilon}=\gamma e/b(\infty)>1$.
Suppose that $d>cT$, where $c$ is given by Proposition \ref{prop:couplingsuccess}.
We then have that
\begin{equation}
\left|r^{(d)}(T)-r(T)\right|+
\left|b^{(d)}(T)-b(T)\right|<e^{-\alpha T}.
\label{couplingbound}
\end{equation}

Combining all of (\ref{bdinfbounds}),
(\ref{btbound}), and
(\ref{couplingbound}), we get
\begin{align*}
b^{(d)}(\infty)
&\geq b^{(d)}(T)-r^{(d)}(T)\\
&\geq b(T)-r(T)
-\left|b^{(d)}(T)-b(T)\right|-\left|r^{(d)}(T)-r(T)\right| \\
&\geq b(\infty)-\frac{1}{T}-e^{-\alpha T},
\\
\intertext{and}
b^{(d)}(\infty)
&\leq b^{(d)}(T)\\
&\leq b(\infty) + (b(T)-b(\infty) +\left|b^{(d)}(T)-b(T)\right|\\
&\leq b(\infty)+\frac{1}{T}+e^{-\alpha T}.
\end{align*}
Since $T=\gamma e/b(\infty)$,
for given $\delta$
we can choose $\gamma$
sufficiently large that $b^{(d)}(\infty)$
lies in
$[(1-\delta)\epsilon e^{-1/\epsilon+1},
(1+\delta)\epsilon e^{-1/\epsilon}+1]$.
Taking $c'=c\gamma$ then completes the proof.
\end{proof}

\begin{proof}[Proof of Theorems \ref{thm:asymmetricRd}
and \ref{thm:symmetricRd}]
The statement of Theorem
\ref{thm:asymmetricRd} follows immediately from
Proposition \ref{prop:larged}.

A similar argument leads to Theorem \ref{thm:symmetricRd}
for the symmetric multi-type model.
Let $x_i^{(d)}(t)$ be the density of points of type $i$
which are not matched within distance $t$, and let
$x_1^{(d)}(\infty)$ be the density of points which
remain unmatched for ever.

As in Section \ref{subsubsec:symm}, define
$x_i(t)$ to be the probability that the
root of the PWIT has type $i$ and is not matched within distance $t$.
Then as $t\to\infty$, $x_i(t)\to 0$ for $i>1$ and
$x_1(t)\to x_1(\infty)>0$.

As at (\ref{bdinfbounds}) above, we have
\[
x_1^{(d)}(t)-\sum_{i>2} x_i^{(d)}(t)
\leq x_1^{(d)}(\infty)
\leq x_1^{(d)}(t).
\]
For any $\delta>0$, if $T$ is large enough, then
\[
x_i(T)<\delta/2k
\]
for all $i>1$, and also
\[
x_1(t)-x_1(\infty)<\delta/2k.
\]
Finally, again if $T$ is large enough, and $d>cT$
where $c$ is given by Proposition \ref{prop:couplingsuccess},
then
\[
\sum_i|x_i^{(d)}(T)-x_i(T)|<e^{-\alpha T} <\delta/2k.
\]
Combining all these bounds we obtain if
$T$ is large enough and $d>cT$, then
$|x_1^{(d)}(\infty)-x_1(\infty)|<\delta$.
Putting $\lambda=x_1(\infty)$,
this gives the conclusion of Theorem \ref{thm:symmetricRd}.
\end{proof}

Using the same methods, one can obtain a result on 
stable matching with a general compatibility graph.
Consider again a stable matching model for a Poisson 
process of rate $1$ in $\R^d$. Let $V$ be the set of 
points of the process, and let each point of $V$
independently receive colour $i$ with probability $p_i$, for $1\leq i\leq k$, where $(p_1,\dots,p_k)$ is a probability vector.

As in Section \ref{sec:uniqueness},
let $\mathcal{G}$ be an undirected graph with vertex set $\{1,\ldots,k\}$ with no parallel edges but possibly with self-loops.  An edge between $i$ and $j$ indicates that colours $i$ and $j$ are compatible,
i.e.\ that points of colours $i$ and $j$ are allowed to be matched to each other; in this case write 
$i\sim j$.

For two points $x,y\in V$ of respective colours $i$ and $j$,
let $\ell(x,y)=|x-y|$ if $i\sim j$, and $\ell(x,y)=\infty$ 
otherwise. 

For the corresponding stable matching model on the PWIT
(with the same vector of colour probabilities $(p_1,\dots,p_k)$), $\ell$ is defined as explained at the beginning of Section \ref{sec:PWITmodel}.
Just as in Section \ref{sec:PWIT}, 
we can consider the probability $x_i(t)$ that the root has colour $i$ and is not matched along an edge of weight less than
$i$. We have $x_i(0)=p_i$, and 
\begin{equation}\label{newxi}
x_i'(t)=-x_i(t)\sum_{j\sim i}x_j(t)
\end{equation}
for each $i$. 
Then $x_i(\infty)=\lim_{t\to\infty}x_i(t)$ gives the
probability that the root has colour $i$ and is unmatched. 

\begin{thm}\label{thm:meta}
Fix a probability vector $(p_1, \dots, p_k)$. 
Consider a multi-type stable matching of
a Poisson process of rate $1$ in $\R^d$ with $k$ colours,
where $p_i$ gives the probability of colour $i$, 
with compatibility graph $\mathcal{G}$.
As $d\to\infty$, the intensity of unmatched colour-$i$ points converges to the value $x_i(\infty)$ obtained from (\ref{newxi}),
which is the probability in the corresponding stable matching
model on the PWIT that the root has colour $i$ and is unmatched. 
\end{thm}
The proof of Theorem \ref{thm:meta} can be done using
exactly the same approach described in the proof of Theorems 
\ref{thm:asymmetricRd} and \ref{thm:symmetricRd} above.

\section{Hierarchical model: proof of Theorem \ref{thm:hierarchical}}
\label{sec:conclusionhierarchical}
Recall that in the context of Theorem \ref{thm:hierarchical},
we have a Poisson process of rate $\lambda$ on $\R_+$,
in which each point is coloured blue with probability $p$
and red with probability $1-p$. Red-red and red-blue matches
are allowed, but not blue-blue. Given the hierarchical distance
$\rho$ defined by (\ref{rhodef}),
let $\ell(x,y)=\rho(x,y)$, except when both points $x$ and $y$ are blue,
in which case $\ell(x,y)=\infty$.

Now we cannot apply Proposition \ref{prop:uniquestable} directly,
since with probability 1 there will be points which are
equidistant from others, and so condition (i) does not hold,
and the stable matching for $\ell$ will not be unique.
We can consider instead the distance $\trho$ on $\R$
given by
\[
\trho(x,y)=\rho(x,y)+|x-y|.
\]
Correspondingly, for two Poisson points $x$ and $y$,
let $\tell(x,y)=\ell(x,y)+|x-y|$ (so that
$\tell(x,y)=\trho(x,y)$ unless
both points are blue, in which case $\tell(x,y)=\infty$).
Now with probability 1, the function $\tell$ does satisfy
the conditions of Proposition \ref{prop:uniquestable}, so
that there exists a unique stable matching for $\tell$.

One can easily show that if $\tell(u,v)\leq \tell(x,y)$,
then also $\ell(u,v)\leq \ell(x,y)$.
Using the definition of stable matching, it follows
that if $M$ is
stable for $\tell$, then it is also stable for $\ell$.
So at least one stable matching for $\ell$ exists.

Recall that we write $N_k(m)$ for the excess of blue points
over red points in the interval $[2^k m, 2^k (m+1)]$ out
of those which are not matched to another point in the interval,
i.e.\ which are not matched at distance $k$ or less.
We use the recursion at (\ref{hierarchicalrecursion}) for
the distribution of $N_k(m)$ as $k$ varies.
Define
\begin{align*}
\beta_k&=\P(N_k(m)\text{ is even})\\
\gamma_k&=\P(N_k(m)\text{ is odd and positive})\\
\delta_k&=\P(N_k(m)\text{ is even and positive})
\end{align*}
First note that for any $k$,
\begin{align*}
\beta_{k}&=\beta_{k-1}^2+\left(1-\beta_{k-1}\right)^2\\
&\geq 1/2.
\end{align*}
Then
\begin{align*}
\gamma_{k+1}&=2\gamma_{k}\beta_k
+ 2\delta_{k}(1-\beta_{k}-\gamma_{k})
\\
&\geq \gamma_{k}
\\
\intertext{and}
\delta_{k+1}&\geq\gamma_k^2,
\\
\intertext{so that}
\gamma_{k+2}
&= 2\gamma_{k+1}\beta_{k+1}
+2\delta_{k+1}(1-\beta_{k+1}-\gamma_{k+1})
\\
&\geq 2\gamma_k\beta_{k+1}
+2\gamma_k^2(1-\beta_{k+1}-\gamma_{k+1})
\\
&=
2\gamma_k \frac12 + 2(\gamma_k-\gamma_{k}^2)(\beta_{k+1}-\frac12)
+2\gamma_k^2(\frac12-\gamma_{k+1})
\\
&\geq
2\gamma_k\frac12 +2\gamma_k^2(\frac12-\gamma_{k+1})\\
&\geq
\gamma_k + \frac{\gamma_{k}^2}{3}
\end{align*}
as long as $\gamma_{k+1}\leq 1/3$.

So $\gamma_k$ will eventually reach $1/3$ for some $k$.
(In fact, for any constant $c$, the
number of iterations of the recursion
$x\to x+x^2/3$ required to exceed the value $c$ starting from
the value $x_0$ is $3(1+o(1))x_0^{-1}$ as $x_0\to 0$.)

Here we have $\gamma_0>
\lambda e^{-\lambda}p
$,
since this is the probability that an interval of length 1
contains no red points and exactly one blue point.
Then for some function $k_0=k_0(\lambda, p)$,
we have that $\gamma_k\geq 1/3$ for all $k\geq k_0(\lambda, p)$.
Then also $E N_k(m)\geq1/6$ for $k\geq k_0(\lambda, p)$,
since $\P(N_k(m)\geq 1)\geq 1/3$, while
$\P(N_k(m)=-1)=\P(N_k(m) \text{ odd})-\gamma_k\leq 1/2-1/3=1/6$.

Note that, more weakly than (\ref{hierarchicalrecursion}),
we have that $N_k(m)\geq N_{k-1}(2m)+N_{k-1}(2m+1)$.

In particular, for any $r>0$, the quantity $N_{k_0+r}(m)$
is bounded below by a sum of $2^r$ independent copies
of the random variable $N_{k_0}(0)$, which has finite mean and is bounded below.
Hence, using standard large deviations results, there is some constant $\theta>0$ such that for all $k\geq k_0$,
\[
\P(N_k(m)< \theta 2^k)\leq e^{-\theta 2^k}.
\]
In particular, the sum of the right-hand side of all $k\geq k_0$ is finite. We obtain
that with probability 1, there exists some $K$ such that
\begin{equation}\label{allblue}
N_K(0)\geq \theta 2^K, \text{ and } N_k(1)\geq \theta 2^k \text{ for all } k\geq K.
\end{equation}

The quantities $N_K(0)$ and $\{N_k(1): k\geq K\}$ relate
to the intervals $[0,2^K)$ and $\{[2^k, 2^{k+1}): k\geq K\}$
which form a partition of $\R_+$. If indeed (\ref{allblue})
holds, then (for any stable matching) none of these intervals contains
a red point which is matched outside the interval. Hence, for any
of these intervals, all the blue points which are not matched within the interval
are not matched at all.

Hence in fact the number of unmatched blue points in $[0,2^k)$ is at least
$\theta 2^k$ for all $k\geq K$. Taking $R=2^K$ and $c=\theta/2$ then gives the result of Theorem \ref{thm:hierarchical}.

\section{Existence and uniqueness of a stable matching:
proof of Propositions \ref{prop:uniquestable}, \ref{prop:whoismatched} and \ref{prop:stablerescaled}}
\label{sec:stableproof}

Before proving Propositions \ref{prop:uniquestable} and
\ref{prop:whoismatched}, we first note a useful characterisation
of a stable matching which holds under the assumption
that the weights of edges from any given point $x$ are
all distinct.

For convenience we repeat here the definition
given at
(\ref{stablecondition}); a matching $M$
is stable if
\begin{equation}
\label{stablecondition2}
\ell(x,y)\geq\min\left(d_M(x), d_M(y)\right)
\text{ for all $x$ and $y$.}
\end{equation}

\begin{lemma}\label{lemma:stableprop}
Suppose condition (i) of Proposition \ref{prop:uniquestable}
holds (the distinct weights condition). Then a matching $M$
is stable iff for all $x\in V$ and $R\in(0,\infty)$,
\begin{equation}
\label{dMprop}
\{d_M(x)<R\} \Leftrightarrow \{\exists y \text{ such that }
\ell(x,y)<R \text{ and } d_M(y)\geq \ell(x,y)\}.
\end{equation}
\end{lemma}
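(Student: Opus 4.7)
The plan is to prove the biconditional (\ref{dMprop}) by checking both directions separately, with the distinct-weights condition entering at only one place.

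For the forward direction, assume $M$ is stable. The ``$\Leftarrow$'' half of (\ref{dMprop}) (left side implies right side) is immediate: if $d_M(x)<R$, take $y=M(x)$, so that $\ell(x,y)=d_M(y)=d_M(x)<R$, and in particular $d_M(y)\geq\ell(x,y)$. For the ``$\Rightarrow$'' half, suppose some $y$ satisfies $\ell(x,y)<R$ and $d_M(y)\geq\ell(x,y)$. Combining this with the stability inequality $\ell(x,y)\geq\min(d_M(x),d_M(y))$ forces the minimum to be $d_M(x)$, and hence $d_M(x)\leq\ell(x,y)<R$, \emph{except} in the borderline case $d_M(y)=\ell(x,y)$. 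In that borderline case I would use condition (i) of Proposition \ref{prop:uniquestable}: the equality $\ell(y,M(y))=\ell(y,x)<\infty$ with $M(y)\ne x$ would give two distinct edges at $y$ with the same finite weight, so $M(y)$ must equal $x$ itself, and therefore $d_M(x)=\ell(x,y)<R$ as well.

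For the reverse direction, I would argue by contraposition. If $M$ fails to be stable, there exist distinct $x,y$ with $\ell(x,y)<\min(d_M(x),d_M(y))$. Choose any $R$ strictly between $\ell(x,y)$ and $d_M(x)$. Then $y$ witnesses the right-hand side of (\ref{dMprop}) for this $x$ and $R$ (indeed $\ell(x,y)<R$ and $d_M(y)>\ell(x,y)$), so by hypothesis the left-hand side must also hold, giving $d_M(x)<R$, which contradicts the choice $R\leq d_M(x)$.

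The only even mildly delicate step is the equality case $d_M(y)=\ell(x,y)$ in the forward direction, which is exactly where the distinct-weights hypothesis is needed — and even there the argument is a one-line application of condition (i). I do not expect any substantial obstacle; the main thing to be careful about is simply not to overlook this borderline case.
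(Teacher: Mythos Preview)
Your proof is correct and follows essentially the same route as the paper's: both directions of the biconditional are checked directly, with condition~(i) invoked precisely to rule out the borderline case $d_M(y)=\ell(x,y)$ when $y\ne M(x)$. One cosmetic point: your labels ``$\Leftarrow$'' and ``$\Rightarrow$'' for the two halves of (\ref{dMprop}) are swapped relative to standard usage (you call ``left implies right'' the ``$\Leftarrow$'' half), though the parentheticals make your meaning clear.
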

\begin{proof}
We will show that if the distinct weights condition holds,
then (\ref{stablecondition2}) and (\ref{dMprop}) are equivalent.

Suppose the matching is not stable, so (\ref{stablecondition})
fails for some $x,y$. Then we can
choose $R$ with $\ell(x,y)<R<\min\{d_M(x), d_M(y)\}$,
in which case the right
side of (\ref{dMprop}) holds but the left side does not.
Hence (\ref{dMprop}) also fails.

On the other hand, suppose that (\ref{stablecondition2}) holds.

If $d_M(x)\geq R$ but the right side of (\ref{dMprop}) were true,
then $x$ and $y$ are not partners (since $\ell(x,y)<d_M(x)$).
But by the distinct weights condition, $y$ does not have a partner
$z$ with $\ell(z,y)=\ell(x,y)$. Hence in fact $d_M(y)>\ell(x,y)$
strictly; then $\ell(x,y)<\min\{d_M(x), d_M(y)\}$
contradicting (\ref{stablecondition2}).

Meanwhile if $d_M(x)<R$ then $x$
has a partner $y$ with $\ell(x,y)<R$ and $d_M(y)=d_M(x)$,
so the right side of (\ref{dMprop}) holds.
Hence under (\ref{stablecondition2}), the
left and right sides of (\ref{dMprop}) are equivalent, as required.
\end{proof}

\begin{proof}[Proof of Propositions \ref{prop:uniquestable}
and \ref{prop:whoismatched}]
The underlying idea is essentially the same
as was used
in \cite{HPPS} for the special cases of one-type
and symmetric two-type models with weights given by distances
in $\R^d$. However we will present the construction rather differently,
so as to make explicit the way in which the stable matching is determined by the collections of descending paths as stated in
Proposition
\ref{prop:whoismatched}. (The argument in \cite{HPPS} is
phrased in terms of  the following recursive construction. Call two
points $x$ and $y$
\textit{mutually closest} if $\ell(x,y)<\ell(x,z)$ for all $z\ne y$
and $\ell(x,y)<\ell(z,y)$ for all $z\ne x$. Now, given the point
configuration, match all mutually closest pairs of points to each other, and then remove them from the configuration. Now match all
pairs which are mutually closest in the remaining set of points;
repeat indefinitely. Lemma 15 of \cite{HPPS} shows that
for the models under consideration, this
recursive construction yields a stable matching, which is
in fact unique.)

We begin by justifying the first assertion of
Proposition \ref{prop:whoismatched}.
Consider the subgraph spanned by the edges of $E_R^{\downarrow}(x)$,
obtained by taking the union of
all descending paths from $x$ with weights less than $R$.
From condition (ii), any vertex has finite degree in this
subgraph (since any vertex is incident to only finitely
many edges with weight less than $R$). If there were
infinitely many vertices in this subgraph, then there
would be arbitrarily long descending paths from $x$,
and then, by compactness, an infinite descending path.
But this is excluded by condition (iii), so indeed
$E_R^{\downarrow}(x)$ is finite.

Now we argue that in fact we
can determine whether $d_M(x)$ by inspecting the set
$E^{\downarrow}_R(x)$, as required for Proposition \ref{prop:whoismatched}.
Using Lemma \ref{lemma:stableprop}, it is enough to determine
whether $d_M(y)< \ell(x,y)$ for all $y$ with $\ell(x,y)<R$.

We proceed by induction on the size of
$E^{\downarrow}_R(x)$. Consider $y$ with $\ell(x,y)<R$.
Then $E_{\ell(x,y)}^{\downarrow}(y)\subset E_R^{\downarrow}(x)$ (the inclusion is strict
since the edge $(x,y)$ is included in the second set but not in
the first).
So for the induction step, we may assume that
we know whether $d_M(y)<\ell(x,y)$ for each such $y$,
and hence indeed we can deduce whether $d_M(x)<R$.
The base of the induction is the case where $x$ is incident
to no edges of weight less than $R$, in which case certainly
$d_M(x)\geq R$.

This completes the proof of Proposition \ref{prop:whoismatched}.
The weights of edges in $E_R^{\downarrow}(x)$ determine whether
$d_M(x)<R$, and hence the weights of all the edges determine
the value of $d_M(x)$ and so, since the weights of edges
incident to $x$ are distinct by (i), in fact determine $M(x)$.
Hence there is at most one stable matching.

To show the existence of a stable matching, note
that the inductive procedure above can be used to \textit{define}
a function $d_M(x)$ for $x\in V$ which satisfies
(\ref{dMprop}). We need to show that this function
actually corresponds to a matching $M$ in the sense of (\ref{dMdef}).

Suppose $d_M(u)=s<\infty$. Then applying (\ref{dMprop})
with $x=u$ and considering both
$R\leq s$ and $R>s$, we obtain that
for some $v$, $\ell(u,v)=s$ and $d_M(v)\geq s$.
Then in turn we can apply (\ref{dMprop}) with $x=v$
and any $R>s$; because we have $u$ with $\ell(v,u)<R$ and
$d_M(u)\geq \ell(u,v)$ it follows that $d_M(v)<R$,
and hence in fact $d_M(v)=s$. Thus there is a point $v$
satisfying $\ell(u,v)=d_M(v)=s$, and this point is unique
by condition (i). Then define $M(u)=v$ (and similarly $M(v)=u$).

Meanwhile if $d_M(u)=\infty$, define $M(u)=u$.
Then indeed $M$ is a matching, and satsfies (\ref{dMprop}),
and so is stable.

Further, suppose $x$ and $y$ are both unmatched by $M$,
so that $d_M(x)=d_M(y)=\infty$. If $\ell(x,y)$ were finite,
then for any $R>\ell(x,y)$, again the right side of (\ref{dMprop})
would hold but the left side would not. Hence indeed $\ell(x,y)=\infty$ as required for the final statement of Proposition
\ref{prop:uniquestable}.

Finally, in Proposition \ref{prop:stablerescaled}
there is a bijection from $V_R^{\downarrow}(x)\subset V$
to $\widetilde{V}^{\downarrow}_{f(R)}(\widetilde{x})\subset \widetilde{V}$
which maps $x$ to $\widetilde{x}$ and under which
the edge-weights are related by the strictly increasing function $f$. The definition of stable matching,
and the equivalent condition in (\ref{dMprop}),
use only information about relative orderings of edge-weights;
such orderings are preserved when the edge-weights
are rescaled by $f$.
Hence the inductive procedures
for determining whether $d_M(x)<R$ for the stable matching
$M$ of $V$, and whether $d_{\widetilde{M}}(\widetilde{x})<f(R)$
for the stable matching $\widetilde{M}$ of $\widetilde{V}$,
proceed identically, and so indeed $d_M(x)<R$ if and only if
$d_{\widetilde{M}}(\widetilde{x})<f(R)$.
\end{proof}

\section*{Open Problems}
Consider a homogeneous Poisson process in $\R^d$ in which each point is independently assigned a colour according to a fixed probability vector.
\begin{enumerate}[(i)]
\item For the asymmetric two-type stable matching (with
    only red-blue and red-red matches allowed), do there
    exist red and blue probabilities for which all points
    are matched?  The question is open for every $d\geq
    1$.
\item For the asymmetric two-type stable matching in a
    fixed dimension $d$, is the intensity of
    unmatched blue points non-decreasing in the initial
    probability of blue points?  Is it strictly
    increasing?
\item For the symmetric three-type stable matching (where
    points of any two distinct colours are allowed to
    match),  suppose that the probabilities $p_2,p_3$ of
    two of the colours are equal.  Symmetry and
    ergodicity imply that either all points are matched,
    or only points of color $1$ are unmatched.  Are all
    points matched when $p_1<p_2=p_3$?
	This question is open for all $d$.
    Are some points unmatched when $p_1>p_2=p_3$? The methods
    of this paper can be used to show that the answer is yes for
    large enough $d$, but for small $d$ the question is open.
\item More generally, for which matching restrictions, probability vectors, and dimensions are all points matched?
\item Can the PWIT provide information about matching distance in high dimensions?  For example, in the case of two-color stable matching (where only red-blue matches are allowed), with equal probability of red and blue points, the probability for a typical point to be matched at distance at least $r$ is known \cite{HPPS} to be between $r^{-\alpha}$ and $r^{-\beta}$ as $r\to\infty$ where $\alpha(d),\beta(d)\in(0,\infty)$, but the bounds on these constants are far apart except when $d=1$. What can be said about their asymptotic behaviour as $d\to\infty$?
\end{enumerate}


\section*{Acknowledgments}
We thank Robin Pemantle for helpful conversations at an early stage of this work. We thank a referee for several valuable comments and suggestions. JBM thanks the Theory Group of Microsoft Research for their support and hospitality; this work was carried out while he was a visiting researcher.

\bibliography{PWITnew}
\bibliographystyle{abbrv}

\bigskip

\noindent
\textsc{Alexander E.\ Holroyd}
\\
\textit{E-mail address:}
\texttt{holroyd@uw.edu}
\\
\textit{URL:}
\texttt{http://aeholroyd.org}
\medskip

\noindent
\textsc{James B.\ Martin, University of Oxford, UK
}
\\
\textit{E-mail address:}
\texttt{martin@stats.ox.ac.uk}
\\
\textit{URL:}
\texttt{http://www.stats.ox.ac.uk/\textasciitilde{}martin}

\medskip

\noindent
\textsc{Yuval Peres}
\\
\textit{E-mail address:}
\texttt{yuval@yuvalperes.com}
\\
\textit{URL:}
\texttt{http://yuvalperes.com}


\end{document}